\documentclass[preprint, 11pt]{amsart}





\usepackage{amssymb,amscd}
 \usepackage{amsthm}




\setlength{\oddsidemargin}{0.25in} \setlength{\evensidemargin}{0.25in}
\setlength{\textwidth}{6in}

\begin{document}
 





\title{Prescribed subintegral extensions of local  Noetherian domains}

\author{Bruce Olberding}

\address{Department of Mathematical Sciences, New Mexico State University,
Las Cruces, NM 88003-8001,
olberdin@nmsu.edu}





\maketitle


\begin{abstract} We show how subintegral extensions of certain local Noetherian domains $S$ can be constructed with specified 
 invariants including reduction number,  Hilbert function, multiplicity and local cohomology. The construction behaves analytically like Nagata idealization but rather than a ring extension of $S$, it produces a subring $R$ of $S$ such that $R \subseteq S$ is subintegral. \end{abstract}




\newtheorem{thm}{Theorem}[section]
\newtheorem{lem}[thm]{Lemma}
\newtheorem{prop}[thm]{Proposition}
\newtheorem{cor}[thm]{Corollary}
\newtheorem{exmp}[thm]{Example}
\newtheorem{rem}[thm]{Remark}
\newtheorem{ques}[thm]{Question}
\newtheorem{defn}[thm]{Definition}

\input amssym.def

\newcommand{\ilim}{\mathop{\varinjlim}\limits}

\def\Jac{\mbox{\rm Jac$\:$}}
\def\Nil{\mbox{\rm Nil$\:$}}
\def\wt{\widetilde}
\def\Q{\mathcal{Q}}
\def\O{\mathcal{O}}
\def\ff{\frak}
\def\Spec{\mbox{\rm Spec}}
\def\ZS{\mbox{\rm ZS}}
\def\wB{\mbox{\rm wB}}
\def\type{\mbox{ type}}
\def\Hom{\mbox{ Hom}}
\def\rank{\mbox{ rank}}
\def\Ext{\mbox{ Ext}}
\def\Ker{\mbox{ Ker }}
\def\Max{\mbox{\rm Max}}
\def\End{\mbox{\rm End}}
\def\ord{\mbox{\rm ord}}
\def\l{\langle\:}
\def\r{\:\rangle}
\def\Rad{\mbox{\rm Rad}}
\def\Zar{\mbox{\rm Zar}}
\def\Supp{\mbox{\rm Supp}}
\def\Rep{\mbox{\rm Rep}}
\def\cal{\mathcal}
\def\p{{\rm{p}}}

\def\gen{\mbox{\rm gen$\:$}}
\def\Jac{\mbox{\rm Jac$\:$}}
\def\Nil{\mbox{\rm Nil$\:$}}
\def\ord{\mbox{\rm ord}}
\def\wt{\widetilde}
\def\Q{\mathcal{Q}}
\def\ff{\frak}
\def\Spec{\mbox{\rm Spec}}
\def\ZS{\mbox{\rm ZS}}
\def\wB{\mbox{\rm wB}}
\def\type{\mbox{ type}}
\def\Hom{{\rm Hom}}
\def\depth{{\rm depth}}
\def\Gen{{\rm Gen}}
\def\rank{\mbox{\rm rank}}
\def\Ext{{\rm Ext}}
\def\embdim{{\mbox{\rm emb.dim } }}
\def\length{{\mbox{\rm length }}}
\def\Ker{\mbox{\rm Ker }}
\def\pd{{\rm pd}}
\def\gr{{\rm gr}}
\def\Max{\mbox{\rm Max}}
\def\End{\mbox{\rm End}}
\def\l{\langle\:}
\def\r{\:\rangle}
\def\Rad{\mbox{\rm Rad}}
\def\Zar{\mbox{\rm Zar}}
\def\Supp{\mbox{\rm Supp}}
\def\Rep{\mbox{\rm Rep}}
\def\cal{\mathcal}
\def\p{{\rm{p}}}

\section{Introduction}

In \cite{Swan}, \index{Swan, R.}
Swan introduced the  notion of a subintegral extension of rings to study the  $K$-theory of seminormal extensions.
 An extension of
(commutative) rings 
 $R \subseteq S$ is {\it subintegral} if it is
integral, the contraction mapping $\Spec(S) \rightarrow \Spec(R)$ is
a bijection and the induced maps on residue field extensions are
isomorphisms. In particular, when $R\subseteq S$ is a subintegral extension of domains, then $R$ and $S$ share the same quotient field. 
Alternatively, an integral extension $R \subseteq S$ is subintegral if and only if
for all homomorphisms $\phi:R \rightarrow K$ into a field $K$, there
exists a unique extension $\phi':S \rightarrow K$ \cite[Lemma 2.1]{Swan}.
In this article we work out some of the  local algebra for a special class of subintegral extensions of Noetherian rings developed in \cite{OlbCounter}, with the goal of  showing how given a local Noetherian ring $S$, a subintegral extension  $R \subseteq S$ can be found with prescribed invariants. The local rings $R$ we consider are interesting also in that they do not have finite normalization and hence are analytically ramified. Thus a secondary motivation is to construct analytically ramified local Noetherian domains that when taken as a pair with their normalization have prescribed  invariants. (A survey of local Noetherian domains without finite normalization can be found in \cite{OSurvey}.) 
In particular, 
we
consider  invariants  such as reduction number, Hilbert function, multiplicity, and local cohomology, for ideals $I$ in  subintegral extensions $R \subseteq S$ of Noetherian rings with the property that $I$ is {\it contracted} from $S$, meaning that  $I = IS \cap R$.   The class of contracted ideals of $R$ includes the prime ideals, as well as the integrally closed ideals of $R$.  Our motivation is to show how these invariants can be prescribed for certain subintegral extensions of Noetherian domains.

The  setting on which we focus has its origins in a construction of analytically ramified local Noetherian domains due to Ferrand and Raynaud \cite{FR}. Roughly, the idea is to construct from a ring $S$ and an $S$-module $K$ a subring $R$ of $S$ that reflects the structure of $S$ and $K$. This is done using a derivation: If  
  $S$ is a ring, $L$ is an $S$-module and $D:S \rightarrow L$ is a derivation, then for any $S$-submodule $K$ of $L$,  $R:=D^{-1}(K)$ is a subring of $S$. By carefully choosing $S$, $L$ and $D$, Ferrand and Raynaud exhibit interesting examples of analytically ramified local Noetherian domains of dimensions $1$ and $2$. A different version of their construction is given in \cite{OlbCounter} in order to produce analytically ramified local Noetherian domains in higher dimensions, and it is this construction that gives rise to the rings we consider here.  While the article \cite{OlbCounter} was concerned with elementary properties of the construction, in this article we consider in more depth the local algebra of these rings.

In the  specific situation on which we focus,  $R$ is  determined by the ring  $S$, an $S$-module $K$ and a derivation on $S$.      
Let $S$ be a ring, let $K$ be an $S$-module, and let $C$ be a multiplicatively closed subset of nonzerodivisors of $S$ that are also nonzerodivisors on $K$. 
  Then a subring $R$ of $S$ is {\it twisted by $K$ along $C$} if there is a  derivation $D:S_C \rightarrow K_C$
   such that $D(c) = 0$ for all $c \in C$ (i.e., $D$ is ``$C$-linear'') and the following properties hold: (a)  $R = S \cap D^{-1}(K)$,
(b)
 $D(S_C)$ generates $K_C$ as an $S_C$-module; and
(c)  $S \subseteq  \Ker D + cS$ for all $c \in C$.
 When it is necessary to specify the derivation $D$, we say that {\it $D$ twists $R$ by $K$ along $C$}. There is also an absolute version of this definition when $S$ is a domain with quotient field $F$, one for which no reference to $C$ is needed: Let $K$ be a torsion-free $S$-module, and let $FK$ denote the divisible hull $F \otimes_S K$ of $K$.  We say that
{\it $R$ is  strongly twisted by  $K$} if there is a derivation $D:F \rightarrow FK$ such that:
(a)  $R = S \cap D^{-1}(K)$;
(b)
 $D(F)$ generates $FK$ as an $F$-vector space;
and
(c)  $S \subseteq  \Ker D + sS$ for all $0\ne s \in S$.
 As above, we say that {\it $D$ strongly twists $R$ by $K$.}
It is not hard to see that strongly twisted implies twisted along $C = A \setminus \{0\}$.  

Twisted subrings were studied in \cite{OlbCounter,OlbAR}, and we recall some of their properties in Section 2. The main existence result  from \cite[Theorem 3.5]{OlbCounter} is

\medskip

{\bf (1.1)} {\it Existence of twisted subrings.}
Let $F/k$ be a  separably generated field extension of infinite transcendence degree such that $k$ has  characteristic $p \ne 0$ and at most countably many elements.     If $S$ is a $k$-subalgebra of $F$ with quotient field $F$ and $K$ is a torsion-free $S$ module of at most countable rank, then there exists a subring $R$ of $S$ that is strongly twisted by $K$.

\medskip

For example, suppose $L$ is a field of positive characteristic that is separably generated and of infinite transcendence degree over a countable subfield $k$. Let $X_1,\ldots,X_d$ be indeterminates for $L$. Then for  any ring $S$ between $L[X_1,\ldots,X_d]$ and $F:=L(X_1,\ldots,X_d)$, and torsion-free $S$-module $K$ of at most countable rank, there exists a subring $R$ of $S$ strongly twisted by $K$ (but $R$ need not be an $L$-subalgebra). Hence, as discussed in Section 2, the extension $R \subseteq S$ is subintegral, and it is such extensions on which we focus in this article.  The general strategy is to construct subintegral extensions $R \subseteq S$ by choosing $S$ in accordance with (1.1), as well as a torsion-free $S$-module $K$, then considering the subring $R$ of $S$ strongly twisted by $K$. The objects  $S$ and $K$ determine $R$, and the local data of $R$ is calculated in terms of invariants of $S$ and $K$.

As discussed in Section 2, when $S$ is a local Noetherian domain and $K$ is chosen carefully enough (e.g., $K$ is a finitely generated $S$-module), then $R$ is also a local Noetherian domain. Moreover, there is an isomorphism of rings $\widehat{R} \rightarrow \widehat{S} \star \widehat{K}$, where $\widehat{(-)}$ represents completion in the ${\bf m}$-adic topology and $\star$ is Nagata idealization (see Section 2).  
Some of the local algebra of idealizations has been studied, for example, by Valtonen in \cite{Val}. 
The goal of this article is to conduct a similar investigation for twisted subrings, which can be viewed as a kind of inversion of idealization. Since the completion of a twisted subring is an idealization,  some of the local algebra of idealizations allows one to deduce quickly some of the local algebra of twisted subrings, such as embedding dimension and multiplicity of the ring. But the multiplicity of non-maximal ideals of the ring is more subtle. The reason that non-maximal ideals  cannot be dealt with similarly is that  $\widehat{R}$ is isomorphic {as a ring} to $\widehat{S} \star \widehat{K}$, but not as an $R$-algebra, at least not with the standard $R$-module structure on $\widehat{S} \star \widehat{K}$ lifted from the $R$-module $S \oplus K$. (Lemma~\ref{product lemma} illustrates one aspect of this complication.) 
The isomorphism in fact has a ``twist'' to it, due to the presence of a derivation. This twist is mainly what this article   accounts for. 

\medskip

{\it A note on generality.} 
The existence theorem (1.1) is our main source of examples for (strongly) twisted subrings in dimension $>1$ (the articles \cite{OlbAR, OlbNag}  deal directly with the more tractable one-dimensional case). 
 Moreover, there is a straightforward characterization in (2.9) of when a strongly twisted subring is Noetherian. This is in contrast to  the more general case of being twisted along a multiplicatively closed set $C$, where only for ideals meeting $C$ can  finite generation be detected; see \cite[Lemma 5.1]{OlbCounter}. 
   However, as noted in Example~\ref{UFD example}, there is at least one case in which {\it Noetherian} subrings twisted along a multiplicatively closed set, rather than strongly twisted, can be produced. For this reason, rather than work with the notationally simpler strongly twisted subrings, most of our results are framed for the more general case of subrings twisted along a multiplicatively closed set. In particular, we often assume in Sections 5--7 that the twisted subring is Noetherian. Beyond the one-dimensional case, the only ways of which  I am aware  to produce a twisted subring that is  Noetherian is  through the existence theorem (1.1) and the characterization of Noetherianness in (2.9), or using the existence result for twisted Noetherian subrings of two-dimensional UFDs that is discussed before Example~\ref{UFD example}. 

\medskip

{\it  Notation and terminology.} All rings in this article are commutative and have an identity.  When $I$ is an ideal of the ring $S$,  then $\mu_S(I)$ denotes the minimal number of elements needed to generate the ideal $I$.  If $L$ is an $S$-module,   a {\it derivation} $D:S \rightarrow L$ is an additive mapping such that $D(st) = sD(t) + tD(s)$ for all $s,t \in S$. When $C$ is a subset of $S$ such that $D(C) = 0$, then $D$ is a {\it $C$-linear derivation}. If $C$ is a multiplicatively closed subset of $S$ consisting of nonzerodivisors, then the $S$-module $L$ is {\it $C$-torsion} provided that for each $\ell \in L$, there exists $c \in C$ such that $c\ell = 0$. It is {\it $C$-torsion-free} if for all $c \in C$ and $\ell \in L$,  $c\ell = 0$ implies $\ell  =0$. The module $L$ is {\it $C$-divisible} if for each $\ell \in L$ and $c \in C$, there exists $\ell' \in L$ such that $\ell=c\ell'$.

\section{Twisted subrings}

In this section we discuss properties of twisted subrings. These subrings occur within $C$-analytic extensions, a notion we recall first. 
Let $\alpha:A \rightarrow S$ be a homomorphism of rings, and let $C$ be a multiplicatively closed subset of $A$  such that the elements of $\alpha(C)$ are nonzerodivisors of $S$.  
 Then, following \cite{Wei}, $\alpha$ is an {\it analytic isomorphism along $C$} if for each $c \in C$, the induced mapping $\alpha_c:A/cA \rightarrow S/cS:a \mapsto \alpha(a) + cS$ is an isomorphism.  When $A$ is a subring of $S$ and the mapping $\alpha$ is the inclusion mapping, we say that $A \subseteq S$ is a {\it $C$-analytic extension}.

\medskip

 {\bf (2.1)} When $A \subseteq S$ is a  ${{\it C}}$-analytic extension,  then the mappings $I \mapsto IS$ and $J \mapsto J \cap A$ yield a
one-to-one correspondence between ideals $I$ of $A$ meeting ${{\it
C}}$ and ideals $J$ of $S$ meeting ${{\it C}}$. Prime ideals of $A$
meeting ${{\it C}}$ correspond to prime ideals of $S$ meeting ${{\it
C}}$, and maximal ideals of $A$ meeting $C$ correspond to maximal
ideals of $S$ meeting $C$.
  If $J$ is a finitely generated ideal of $S$ meeting ${{\it C}}$ that can be generated
by $n$ elements, then $J \cap A$ can be generated by $n+1$ elements.
If also $A$  is quasilocal, then $J \cap A$ can be
generated by $n$ elements. All this can be found in \cite[Proposition 2.4]{OlbCounter}.

\medskip

Let $S$ be a ring, let $K$ be an $S$-module, and let $C$ be a multiplicatively closed subset of nonzerodivisors on $S$ that are also nonzerodivisors on $K$. Suppose that $R$ is twisted by $K$ along $C$, and $D$ is the derivation that twists it. 

\medskip

{\bf (2.2)} For each $s \in S$, there exists $c \in C$ such that $cs \in R$. Thus $R_C = S_C$, so that $R$ and $S$ share the same total ring of quotients \cite[Theorem 4.1]{OlbCounter}.
\medskip

{\bf (2.3)}
The extension $R \subseteq S$ is {\it quadratic}, meaning that 
 every $R$-submodule of $S$ containing $R$ is a ring; equivalently, $st \in sR + tR + R$ for all $s,t \in S$.  
   In particular, $R \subseteq S$ is an integral extension that is not finite unless $R = S$ \cite[Theorem 4.1]{OlbCounter}.   

\medskip

{\bf (2.4)} The mappings $P \mapsto PS$ and $Q \mapsto Q \cap R$ define a
one-to-one correspondence between prime ideals $P$ of $R$ meeting
${{\it C}}$ and prime ideals $Q$ of $S$ meeting ${{\it C}}$. Under
this correspondence, maximal ideals of $R$ meeting $C$ correspond to
maximal ideals of $S$ meeting $C$.
 If also $S$ is a domain, then
the contraction mapping $\Spec(S) \rightarrow \Spec(R)$ is a bijection \cite[Theorem 4.2]{OlbCounter}.

\medskip

  Let $B$ be a ring and $L$ be a $B$-module.  Then the {\it idealization} $B \star L$ of $L$ is defined as  an abelian group to be $B \oplus L$, and whose ring multiplication is given by $$(b_1,\ell_1)\cdot(b_2,\ell_2) = (b_1b_2,b_1\ell_2 +b_2\ell_1).$$  Thus the ring $B \star L$ has a square zero ideal corresponding to $L$.  

\medskip

{\bf (2.5)}  
The mapping $f:R \rightarrow S \star K:r \mapsto (r,D(r))$
 is an analytic isomorphism along $C$; see  \cite[Theorem 4.6]{OlbCounter} or \cite[Proposition 3.5]{OlbAR}.

\medskip

 {\bf (2.6)}  Let $I$ be an ideal of $R$ meeting $C$. For each $R$-module $L$, let $\widehat{L}_I$ denote the completion of $L$ in the $I$-adic topology; that is, $\widehat{L}_I = \lim_{\leftarrow} L/I^kL$. 
Then the mapping $f$ in (2.5) lifts to an isomorphism of rings, $\widehat{R}_I \rightarrow \widehat{S}_I \star \widehat{K}_I.$   When $R$ is quasilocal and $I$ is the maximal ideal of $R$, we drop the subscript $I$ in the notation $\widehat{L}_I$ and write $\widehat{L}$ for the ${\bf m}$-adic completion of $L$. In the case where $S$ is quasilocal with finitely generated maximal ideal $N$ meeting $C$, then since by (2.4), $N = MS$, it follows that the $N$-adic and $M$-adic topologies on $S$-modules agree, so the completions of $S$ and $K$ can be viewed  in either the $M$-adic or $N$-adic topologies. 

\medskip


{\bf (2.7)} 
 With $A = S \cap \Ker D$, the extension $A \subseteq S$ is  $C$-analytic.   Conversely,  if there exists a subring $A$ of $R$ containing $C$ such that $A  \subseteq S$ is $C$-analytic, $R \subseteq S$ is quadratic, and $S/R$ is $C$-torsion,  then $R$ is twisted  along $C$ by a $C$-torsion-free $S$-module
 \cite[Theorem 2.5]{OlbCounter}.
 
 \medskip

Now we strengthen our hypotheses on $R$, $S$ and $K$ and assume that $S$ is a domain with quotient field $F$, $K$ is a torsion-free $S$-module, $R$ is a subring of $S$ strongly twisted by $K$, and $D$ is the derivation that strongly twists $R$. 

\medskip

{\bf (2.8)} 
The extension $R \subseteq S$  is a subintegral extension \cite[Theorem 4.1]{OlbCounter}.

\medskip

{\bf (2.9)}
 The ring $R$ is a Noetherian domain if and only if $S$ is a Noetherian domain and for each $0 \ne a \in S \cap \Ker D$,  $K/aK$ is a finitely generated $S$-module \cite[Theorem 5.2]{OlbCounter}.

\medskip

{\bf (2.10)} The mapping $f:R \rightarrow S \star K$ in (2.5) is faithfully flat \cite[Theorem 4.6]{OlbCounter}. 
 
 \medskip

{\bf (2.11)} With $A = S \cap \Ker D$, the extension $A \subseteq S$ is {\it strongly analytic}, meaning that  $sS \cap A \ne 0$ for all $0 \ne s \in S$ and $A \subseteq S$ is a $C$-analytic extension along 
 $C = A \setminus \{0\}$.   
  Conversely,  if there exists a subring $A$ of $R$  such that $A  \subseteq S$ is strongly analytic, $R\subseteq S$ is quadratic and $R$ has quotient field $F$, then there is a torsion-free $S$-module $K$ such that  $R$ is strongly twisted   by $K$
  \cite[Corollary 2.6]{OlbCounter}.   
 
 \medskip
 
 To illustrate some basic aspects of the construction, we 
return to the example discussed in the introduction: 
 $L$ is a field of positive characteristic that is separably generated and of infinite transcendence degree over a countable subfield $k$, and  $X_1,\ldots,X_d$ are indeterminates for $L$. Let   $S$ be a local Noetherian ring between $L[X_1,\ldots,X_d]$ and $F:=L(X_1,\ldots,X_d)$,  and let $K$ be a nonzero finitely generated torsion-free  $S$-module. 
 The existence result (1.1) shows that there exists a strongly twisted subring $R$ of $S$. Thus there is a derivation $D:F \rightarrow FK$ that strongly twists $R$; in particular, $R = S \cap D^{-1}(K)$.  
However, this  derivation is constructed  in the proof of \cite[Theorem 3.5]{OlbCounter} in  an {\it ad hoc} set-theoretic way and hence it is not clear how to write  $R$ in more explicit terms. (The requirements that 
 $D(F)$ generate $FK$ as an $F$-vector space
and
  $S \subseteq  \Ker D + sS$ for all $0\ne s \in S$ are a challenge to satisfy simultaneously and rule out more ``natural'' choices of derivations.)  
 In any case, 
 from the point of view of the construction, the key property of $D$ is simply that it exists. For then $R$ is a local Noetherian ring (2.9), the extension $R \subseteq S$ is subintegral and the $M$-adic completion of $R$ (where $M$ is the maximal ideal of $R$) is isomorphic to $\widehat{S} \star \widehat{K}$, where the completions in the idealization are taken in the $N$-adic topologies with $N$ the maximal ideal of $S$.   In particular, $R$ is analytically ramified. As we show in the next sections, more subtle information about $R$ can be extracted from $S$ and $K$ using  (the existence of) the derivation $D$.    


\section{Reductions of ideals}

 Let $A$ be a ring, and let $I$ be an ideal of $A$.  An ideal $J \subseteq I$ is a {\it reduction} \index{reduction} of $I$ if there exists $n > 0$ such that $I^{n+1} = JI^n$.
The smallest integer $n$ such that $I^{n+1} = JI^n$ is the
{\it reduction number}  of
$I$ with respect to $J$.
A reduction $J$ of $I$ is a \index{reduction!minimal} {\it minimal reduction} of $I$  if $J$ itself has no proper reduction.  If $I$ is an ideal of a local Noetherian ring, then minimal reductions must exist \cite[Theorem 8.3.5]{HS}.  The \index{analytic spread} {\it analytic spread} of an ideal $I$ in a local Noetherian ring $(A,{\ff m})$, denoted $\ell_A(I)$, is useful in detecting minimal reductions, as we discuss below.   It is defined to be  the Krull dimension of the {fiber cone}  \index{fiber cone} of $I$ with respect to $A$, where for an ideal $I$ and an $A$-module $L$, the {\it fiber cone} of $
I$ with respect to $L$ is $${\cal F}_{I,L}:= L[It]/{\ff m}L[It] \cong  \bigoplus_{n=0}^\infty I^nL/{\ff m}I^{n}L.$$ Thus ${\cal F}_{I,L}$ is an $A$-module, and when $L = A$ we may view ${\cal F}_{I,L}$ as a ring in the obvious way.

We consider in this section reductions of  contracted ideals in twisted subrings.  Although the main case we have in mind is the Noetherian one, many of the results are proved in the following more general setting, which is our standing assumption for this section.

\begin{quote} {\it Let $S$ be a quasilocal ring, let $K$ be an $S$-module, and let $C$ be a multiplicatively closed subset of nonzerodivisors of $S$ that are also nonzerodivisors on $K$, and
let $R$ be a (necessarily quasilocal) subring  twisted along $C$ by a derivation $D$.} \end{quote}

In the setting of this section, the following theorem shows that  a reduction of $IS$ gives rise to a reduction of $I$, but slightly deeper inside $I$.

\begin{thm} \label{pre-construction reduction} Suppose  that $I$ is an ideal of $R$ that is contracted from $S$ and  meets $C$. \index{twisted subring!and reduction}
Let $n>0$.  If there exists a finitely generated ideal $J$ of $S$ with $$J \subseteq IS \: \:  {\mbox{ and }} \:
 (IS)^{n+1} = J(IS)^n,$$ then there exists an $R$-ideal $J' \subseteq I$  with $ I^{n+2} = J'I^{n+1}$  and $\mu_R(J') = \mu_S(J)$.


\end{thm}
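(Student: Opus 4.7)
The plan is to lift $J$ to an $R$-ideal $J'$ by using the $C$-analytic extension $A := S \cap \Ker D \subseteq S$, then establish the reduction relation at the $S$-level and descend to $R$. First I would observe that $J$ meets $C$: since $I$ meets $C$, so does $(IS)^{n+1}$, and the factorization $(IS)^{n+1} = J(IS)^n$ forces $J$ to meet $C$ as well. By (2.7), $A \subseteq S$ is $C$-analytic, and granting that $A$ is quasilocal (which I would verify in the standing setting), (2.1) produces generators $a_1,\dots,a_m\in A$ with $m=\mu_S(J)$ such that $(a_1,\dots,a_m)A=J\cap A$ and $(a_1,\dots,a_m)S=J$. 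Each $a_i\in J\cap A\subseteq IS\cap R=I$, using contractedness of $I$ and $A\subseteq R$, so $J':=(a_1,\dots,a_m)R$ satisfies $J'\subseteq I$, $J'S=J$, and $\mu_R(J')=m$ (from $\mu_R(J')\ge\mu_S(J'S)=m$ together with the obvious upper bound).

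Using $J'S=J$ and the identity $(IS)^k=I^kS$, the hypothesis becomes $I^{n+1}S=J'I^nS$. Multiplying through by $I$ gives $I^{n+2}S=J'I^{n+1}S$, whence $I^{n+2}\subseteq J'I^{n+1}S\cap R$. The reverse containment $J'I^{n+1}\subseteq I^{n+2}$ is immediate from $J'\subseteq I$, so the theorem reduces to showing that $J'I^{n+1}$ is contracted from $S$, i.e.\ $J'I^{n+1}S\cap R\subseteq J'I^{n+1}$.

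This contractedness step is where the main difficulty lies. I would attack it by combining the quadratic property (2.3) — $st\in sR+tR+R$ for $s,t\in S$ — with the specific fact that the generators $a_i$ of $J'$ lie in $\Ker D$, so that the derivation acts cleanly on products: $D(a_is)=a_iD(s)$ for $s\in S_C$. Heuristically, the jump from the $(n+1)$-st power on the $S$-side to the $(n+2)$-nd on the $R$-side should provide precisely the extra factor of $I$ needed to absorb the non-contractedness of $I^{n+1}$ itself; one factor of $I$ is spent to ``pay'' for the twist introduced by $D$. An alternative line of attack would work inside the idealization $S\star K$ via the analytic isomorphism of (2.5), leveraging the fact that $C$-analytic correspondences preserve products and powers of ideals meeting $C$; however this approach requires care because the ideal $I(S\star K)$ does not equal the naive rectangular ideal $IS\star K$, the discrepancy being exactly the nontrivial coupling induced by $D$ on the two factors of the idealization.
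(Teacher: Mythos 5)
Your construction of $J'$ matches the paper's: you pass to $A=S\cap \Ker D$, use the $C$-analytic extension $A\subseteq S$, quasilocality of $A$ (which does hold and is verified in the paper), and the correspondence (2.1) to get $a_1,\dots,a_m\in J\cap A$ with $(a_1,\dots,a_m)A=J\cap A$, $(a_1,\dots,a_m)S=J$, $m=\mu_S(J)$, and then $J'=(J\cap A)R\subseteq I$ with $\mu_R(J')=\mu_S(J)$. The gap is the heart of the theorem: the inclusion $I^{n+2}\subseteq J'I^{n+1}$ is never proved — you offer only a heuristic (``one factor of $I$ pays for the twist'') and an unexecuted alternative via the idealization. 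Worse, the step you reduce to, namely that $J'I^{n+1}$ is contracted from $S$, is strictly stronger than the theorem and is false in general: since $J'I^{n+1}S=I^{n+2}S$, it would force $I^{n+2}=I^{n+2}S\cap R$, and higher powers of contracted ideals are typically not contracted in this setting. Concretely, let $S$ be a DVR with maximal ideal $N=tS$ and let $R$ be strongly twisted by $K=S$ (possible by (1.1)); take $I=M$, $J=N$, so your $J'$ has $J'S=N$. Because $S=A+t^kS$ and $A\subseteq R$, the natural map $R\rightarrow S/t^kS$ is onto and $R/(N^k\cap R)$ has length $k$, while Theorem~\ref{pre-construction Hilbert} (whose proof is independent of the present theorem) gives that $R/M^k$ has length $2k-1$. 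Hence $M^{n+2}\subsetneq N^{n+2}\cap R=J'M^{n+1}S\cap R$, so $J'M^{n+1}\subseteq M^{n+2}$ is not contracted, and the sufficient condition you aim for cannot be established.

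What is needed instead is an argument that never contracts powers of $I$ from $S$. The paper works with intersections with $A$ throughout: first it proves $I^2=(I\cap A)I$, by writing $x=a+cs$, $y=b+ct$ with $a,b\in A$, $c\in I\cap C$, and using the quadratic property (2.3) to absorb the term $c^2st$; second, for ideals $J_1,\dots,J_m$ of $S$ all containing $c$ it proves $(J_1\cdots J_m)\cap A=(J_1\cap A)\cdots(J_m\cap A)$, via $S=A+c^mS$, the modular law, and $A\cap c^mS=c^mA$. Applying the second identity to $(IS)^{n+1}=J(IS)^n$ gives $(I\cap A)^{n+1}=(J\cap A)(I\cap A)^n$, and then the first identity yields $I^{n+2}=(I\cap A)^{n+1}I=(J\cap A)I^{n+1}=J'I^{n+1}$; the extra power of $I$ enters precisely through $I^2=(I\cap A)I$, not through any contractedness of $I^{n+2}$. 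Some argument of this kind must replace your contractedness step.
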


\begin{proof} Let $A = C \cap \Ker D$, where $D$ is the derivation that twists $R$. By (2.7), $A \subseteq S$ is a $C$-analytic extension.  First we claim that $I^2 = (I \cap A)I$.  Let $x,y \in I$.  We show that $xy \in (I \cap A)I$.   Let $c \in I \cap C$.   Then since $S/A$ is $C$-divisible, there exist $a,b \in A$ and $s,t \in S$ such that $x = a + cs$ and $y  = b + ct$.  Therefore, $$xy = ab + bcs + act + c^2st.$$  We examine each component of this sum separately.  First, $$a  = x  - cs \in IS \cap A = (IS \cap R) \cap A = I \cap A.$$ Similarly, $b \in I \cap A$, and thus $ab \in (I \cap A)^2$.      Next, since $I$ is contracted from $R$, we have
 $cs \in cS \cap R \subseteq IS \cap R = I$, and similarly, $ct \in I$.  Thus $bcs + act \in (I \cap A)I$. 
 Also, since $R \subseteq S$ is a quadratic extension,
$st \in sR + tR +R$, so  
  $c^2st \in c(cs)R+ c(ct)R +c^2R \subseteq cI \subseteq  (I \cap A)I$.   This proves that $I^2 = (I \cap A)I$.

We claim next that if $J_1,\ldots,J_n$ are ideals of $S$ with $c$ in each $J_i$, then $$(J_1J_2 \cdots J_n) \cap A = (J_1 \cap A)(J_2 \cap A) \cdots (J_n \cap A).$$  
From $S = A +cS$, it follows that  $J_i = (J_i \cap A)S$ and  
 $S = A+c^nS$.  Thus, multiplying both sides of the latter equality of rings by the product of the $J_i \cap A$ yields,
    $$J_1\cdots J_n  = (J_1 \cap A) \cdots (J_n \cap A) + c^nS.$$  Intersecting with $A$ and applying the Modular Law, we have: $$(J_1 \cdots J_n) \cap A = (J_1 \cap A) \cdots (J_n \cap A) + c^nS \cap A.$$ Now since $A \cap cS = cA$, 
    it follows that $A \cap c^nS = c^nA$.  Hence,
 since $c$ is in each $J_i$, the claim is proved.

Now suppose $J$ is a finitely generated ideal of $S$ such that $J \subseteq IS$ and   $(IS)^{n+1} = J(IS)^n$. Define $J' = (J \cap A)R$.  Since $I = IS \cap R$, it follows that $IS \cap A = I \cap A$. Also, since $(IS)^{n+1} \subseteq J$, then $J$ meets $C$. Thus by 
 the above claim:
$$(I \cap A)^{n+1} = (IS \cap A)^{n+1} = I^{n+1}S \cap A = JI^nS \cap A = (J \cap A)(I \cap A)^{n}.$$  Consequently,  since $I^2 =
   (I \cap A)I$, it follows  that $$I^{n+2} =  I(I \cap A)^{n+1} = (J \cap A)I(I \cap A)^n =
   (J \cap A)I^{n+1}.$$  Set $J' = (J \cap A)R$, so that $I^{n+2} = J'I^{n+1}$, and note also that $J' = (J \cap A)R \subseteq IS \cap R = I$.

 We examine more closely the possible choices of generators for  $J'$.  Write $J = (x_1,\ldots,x_k)S$, where $k = \mu_S(J)$,  and choose $c \in J \cap C$.
Then since $S = A + c^2S$, there exist $a_1,\ldots,a_k \in A$ and $s_1,\ldots,s_k \in S$ such that for each $i$, $x_i = a_i + c^2s_i$.  Note that $a_i = x_i - c^2s_i \in J \cap A$, so necessarily,  $$J = (x_1,\ldots,x_k)S = (a_1,\ldots,a_k,c^2)S.$$ By (2.1), $J \cap A = (a_1,\ldots,a_k,c^2)A$.
We claim that in fact $J \cap A = (a_1,\ldots,a_k)A$.
To this end, we first observe that $A$ is quasilocal.  For let ${\ff m}$ be the contraction of the maximal ideal of $S$ to $A$.  If $b \in A \setminus {\ff m}$, then since $S$ is quasilocal, there exists $s$ in $S$ such that $1= bs$, and hence $0 = D(1) =  D(bs) = bD(s) + sD(b) = bD(s)$.  But  then $0 = sbD(s) = D(s)$,  which means that $s \in S \cap \Ker D = A$, proving that $b$ is a unit in $A$, and hence that $A$ is quasilocal with maximal ideal ${\ff m}$.
Therefore, since $c^2 \in {\ff m}(J \cap A)$, Nakayama's Lemma implies that $J \cap A = (a_1,\ldots,a_k)A$.
Moreover, by (2.1), $(J \cap A)S = J$, so $k = \mu_S(J) \leq \mu_A(J \cap A) \leq k$. Thus 
 $\mu_A(J \cap A) = \mu_S(J)=k$, which since $J' = (J \cap A)R$ implies $\mu_R(J') \leq k$.  But $J'S = (J \cap A)S = J$,  so since  $\mu_S(J) = k$, this forces $\mu_R(J') = k$.
\end{proof}



While the next theorem seems  natural in light of the isomorphism $\widehat{R} \rightarrow \widehat{S} \star \widehat{K}$, because this isomorphism is not an $R$-algebra map with respect to the obvious $R$-module structure on $\widehat{S}\oplus  \widehat{K}$, the proof contains several subtleties, one of the main ones being that the isomorphism of rings given in the theorem contains a twist of degree $-1$.  

\begin{thm} \label{pre-construction fiber} 
Suppose that  $I$ and $J$ are proper  ideals of $R$ contracted from $S$ and meeting $C$, and such that $I \subseteq J$.  Then there is an isomorphism of rings:  \index{twisted subring!fiber cone of}
$$R[It]/JR[It] \cong S[It]/JS[It] \star K[It]/JK[It].$$  In particular,  the fiber cone of $I$ is
${\cal F}_{I,R} \cong {\cal F}_{IS,S} \star {\cal F}_{IS,K}.$
\end{thm}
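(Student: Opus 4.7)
The plan is to extend the analytic isomorphism $f : R \hookrightarrow S \star K$ from (2.5), $r \mapsto (r, D(r))$, to the level of Rees algebras. Setting $\mathcal{I} := f(I)(S \star K) = IS \oplus (IK + SD(I))$ and $\mathcal{J} := f(J)(S \star K) = JS \oplus (JK + SD(J))$ as ideals of $S \star K$ (both meeting $C$), the idealization multiplication rule $(a,m)(b,n) = (ab, an + bm)$, together with the square-zero property of the $K$-summand, inductively yields $\mathcal{I}^n = I^n S \oplus (I^n K + I^{n-1} S D(I))$ for $n \ge 1$. The $K$-component has $D(I)$ sitting one filtration degree lower than expected, which is the source of the ``twist of degree $-1$'' flagged before the theorem.

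I then define the graded ring homomorphism $F : R[It] \to (S \star K)[\mathcal{I} t]$ by $F(rt^n) = f(r)t^n$ for $r \in I^n$; this is well-defined because $D(r) \in I^{n-1} K \subseteq I^n K + I^{n-1} S D(I)$, so $f(r) \in \mathcal{I}^n$. Verifying that $F$ is $C$-analytic reduces, degree-by-degree, to showing $I^n/cI^n \to \mathcal{I}^n/c\mathcal{I}^n$ is an isomorphism; this follows from $\mathcal{I}^n = f(I^n)(S \star K)$, the decomposition $S \star K = R + c(S \star K)$ arising from the $C$-analyticity of $f$, and the fact that $c$ is a nonzerodivisor on $S \star K$. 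Applying (2.1) to $F$ with the ideal $J$, which meets $C$, one obtains a ring isomorphism
\[
R[It]/JR[It] \;\cong\; (S \star K)[\mathcal{I} t]\bigm/\mathcal{J}(S \star K)[\mathcal{I} t].
\]

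The remaining and main obstacle is to identify the right-hand side with $(S[It]/JS[It]) \star (K[It]/JK[It])$. A direct computation in the idealization gives $\mathcal{J}\mathcal{I}^n = JI^n S \oplus (JI^n K + JI^{n-1} S D(I) + I^n S D(J))$, so the $K$-component of $\mathcal{I}^n/\mathcal{J}\mathcal{I}^n$ is a priori $(I^n K + I^{n-1} S D(I))/(JI^n K + JI^{n-1} S D(I) + I^n S D(J))$. The hypothesis $I \subseteq J$ forces this to collapse to $I^n K / J I^n K$: for any $sD(i)$ with $i \in I \subseteq J$, the element $(0, sD(i))$ already lies in $\mathcal{J}$, since $sD(i) \in SD(J) \subseteq JK + SD(J)$. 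Exploiting this, together with the Leibniz rule for $D$ and the contractedness of $I$ and $J$ from $S$, one checks that every generator of $I^{n-1} S D(I)$ is absorbed modulo $\mathcal{J}\mathcal{I}^n$ up to an element of $I^n K$, so the natural map $I^n K / J I^n K \to \mathcal{I}^n/\mathcal{J}\mathcal{I}^n$ is a bijection onto the $K$-component. Specializing $J = {\ff m}$ then gives the fiber cone statement.
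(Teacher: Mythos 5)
Your overall strategy (pass to the Rees algebra of $S\star K$ along the ideal $\mathcal{I}=f(I)(S\star K)$ and transport the quotient through the analytic isomorphism $f$) is viable, and it correctly locates the degree $-1$ twist; but two steps carry the real content and both are defective as written. First, the $C$-analyticity of $F$ is not established. The ingredients you cite ($\mathcal{I}^n=f(I^n)(S\star K)$, $S\star K=f(R)+c(S\star K)$, $c$ a nonzerodivisor) do give surjectivity of $I^n/cI^n\rightarrow \mathcal{I}^n/c\mathcal{I}^n$, but injectivity is a contraction statement: in degree $n$ it amounts to showing that if $r\in I^n$ with $r\in cI^nS$ and $D(r)\in cI^{n-1}K$ then $r\in cI^n$ (and the application to $J$ needs $JI^n=\{r\in I^n: r\in JI^nS,\ D(r)\in JI^{n-1}K\}$). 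Powers and products of contracted ideals are not automatically contracted, and this is exactly the technical heart of the paper's proof of injectivity of its map $\beta$: it requires the subring $A=S\cap\Ker D$, the $C$-analyticity of $A\subseteq S$, and the identities $K=D(cS\cap R)+cK$ and $D^{-1}(cK)\cap cS=cR$. None of that machinery appears in your proposal, so the appeal to (2.1) is not yet justified.

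Second, the final identification is false as stated. Since $I$ and $J$ are contracted and meet $C$, one has $IK+SD(I)=K$ and $JK+SD(J)=K$, so in fact $\mathcal{I}^n=I^nS\oplus I^{n-1}K$ and $\mathcal{J}\mathcal{I}^n=JI^nS\oplus JI^{n-1}K$; hence the $K$-component of $\mathcal{I}^n/\mathcal{J}\mathcal{I}^n$ is $I^{n-1}K/JI^{n-1}K$, not $I^nK/JI^nK$. Because $I\subseteq J$ forces $I^nK\subseteq JI^{n-1}K$, the natural map $I^nK/JI^nK\rightarrow \mathcal{I}^n/\mathcal{J}\mathcal{I}^n$ is actually zero, so no "collapse'' occurs; your argument conflates membership in $\mathcal{J}$ with membership in $\mathcal{J}\mathcal{I}^n$ (the degree-$n$ part of the ideal generated by $\mathcal{J}$). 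The failure is already visible in degree $0$, where the $K$-part of $(S\star K)/\mathcal{J}$ is $0$ while your target has $K/JK$. The correct way to finish is to embrace the shift rather than remove it: the total $K$-part of the quotient is $\bigoplus_{n\geq 1}I^{n-1}K/JI^{n-1}K\cong K[It]/JK[It]$, and since the square-zero summand of an idealization may be re-indexed freely (the $S[It]/JS[It]$-action matches degrees $a+(b-1)=(a+b)-1$), one still obtains the asserted ring isomorphism. This re-indexing is precisely the twist the paper builds into its map $\beta\bigl(\sum_k i_kt^k\bigr)=\bigl(\sum_k i_kt^k,\ \sum_k D(i_{k+1})t^k\bigr)$ from the outset, after which it proves well-definedness, multiplicativity, injectivity and surjectivity directly; your route can be repaired along these lines, but both the analyticity claim and the identification step then require essentially the same computations the paper carries out.
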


\begin{proof}
Define  a mapping
 $$\beta:R[It]/JR[It] \rightarrow S[It]/JS[It] \star K[It]/JK[It]$$  by $$\beta\left(\sum_{0\leq k}i_kt^k + JR[It]\right) = \left(\sum_{0\leq k}i_kt^k + JS[It], \sum_{0 \leq k}D(i_{k+1})t^k + JK[It] \right),$$ 
 where each $i_k \in I^k$ and all but finitely many $i_k$ are $0$.  
  We claim that $\beta$ is a ring isomorphism.
 To see first that $\beta$ is well-defined, note that  $D(i_{k+1}) \in D(I^{k+1}) \subseteq I^kD(I) \subseteq I^kK$, so that $D(i_{k+1})t^k \in K[It]$. Moreover,  
  suppose that $\sum_{0\leq k} i_kt^k \in JR[It]$, where for each $k$, $i_k \in JI^k$, and all but finitely many of the $i_k$ are $0$.  
  It suffices to show that $\sum_{0 \leq k}D(i_{k+1})t^k \in JK[It]$, and to prove this it is enough to check that there is a containment, $\sum_{0\leq k}D(JI^{k+1})t^k \subseteq JK[It]$.   
This is indeed the case, since 
  \begin{eqnarray*}
 \sum_{0 \leq k} D(JI^{k+1})t^k  
 & \subseteq &   \sum_{0 \leq k} (JI^{k}D(I) + I^{k+1}D(J))t^k  \\
\: & \subseteq &  \sum_{0 \leq k} JI^kKt^k \:\: \subseteq \:\:  JK[It].
\end{eqnarray*}
This shows that $\beta$ is well-defined.

We claim next that $\beta$ is a ring homomorphism.  It is clearly additive and unital.  To see that $\beta$ preserves multiplication, let $f_1 = \sum_{0\leq k}i_kt^k \in R[It]$ and $f_2=\sum_{0\leq k}j_kt^k \in R[It]$, and
 set $$f = f_1 \cdot f_2 = \sum_{0 \leq k} \:\sum_{n+m=k}  i_n j_m t^k.$$ Then properties of derivations show 
 %
 %
\begin{eqnarray*}
\beta(f+J[It]) & = & \left( f + JS[It], \:   \sum_{0 \leq k} D\left( \sum_{n+m=k+1}  i_n j_m\right) t^k +JK[It]\right) \\
\: & = & \left(  f+JS[It], \: \sum_{0 \leq k} \: \sum_{n+m = k+1} (i_nD(j_m)+j_mD(i_n))t^k + JK[It] \right) \\
\: & = & \beta\left(  f_1+J[It] \right) \cdot \beta\left(f_2+ J[It] \right).
\end{eqnarray*} Therefore, $\beta$ is a ring homomorphism.

To see that $\beta$ is one-to-one, let $\sum_{0\leq k}i_kt^k \in R[It]$, so that each $i_k \in I^k$, and suppose that $$\sum_{0\leq k}i_kt^k \in JS[It] \:  {\mbox{ and }} \: \sum_{0 \leq k}D(i_{k+1})t^k \in JK[It].$$  Then for each $k\geq 0$, we have
 $$i_k \in I^k, \: i_k \in JI^kS \: {\mbox{ and }} \: D(i_{k+1}) \in JI^{k}K.$$
We claim that for each $k\geq 0$, $i_k \in JI^k.$  We have  $i_0 \in JS \cap R = J$, so the claim is true for $k=0$.  Now suppose $k>0$.
Then by assumption, $$i_k \in I^k, \: i_k \in JI^kS \: {\mbox{ and }} \: D(i_{k}) \in JI^{k-1}K.$$
Choose $c \in JI^k \cap C$. Let $A = S \cap \Ker D$, so that by (2.7), $A \subseteq S$ is a $C$-analytic extension.   By (2.1), since $J = JS \cap R$ and $I = IS \cap R$, it follows that 
 $JS = (J \cap A)S$ and $IS = (I \cap A)S $.      Set ${\ff a} = (J \cap A)(I \cap A)^k$.    Since $S = A + cS$, we have
$$JI^kS   =   {\ff a}S =  {\ff a} + c{\ff a}S \subseteq {\ff a} + cS.$$
Thus we may write $i_k = a + c\sigma$, for some $a \in {\ff a}$ and  $\sigma \in S$.
 Since $a \in {\ff a} \subseteq JI^k$, to show that $i_k \in JI^k$, it suffices to prove that $c\sigma \in   JI^k$.
Let ${\ff b} = (J \cap A)(I \cap A)^{k-1}$.  As above, ${\ff b}S = JI^{k-1}S$.
Now since $D(A)=0$ and  $K = D(cS \cap R) + cK$ (see the proof of \cite[Proposition 3.5]{OlbAR}), then
\begin{eqnarray*}
D(c\sigma)  & = & D(a+c\sigma) \:\: = \:\: D(i_k) \\
& \in & JI^{k-1} K  \:\: =  \:\:{\ff b}K
\:\: =  \:\:  {\ff b}(D(cS \cap R) + cK) \\
\: & \subseteq & D({\ff b}(cS \cap R)) + cK.
\end{eqnarray*}
So there exist $a_1,\ldots,a_m \in {\ff b}$ and $\sigma_1,\ldots,\sigma_m \in S$ such that $c\sigma_j \in R$ for all $j$ and
 $$D(c\sigma) - D(\sum_{j}a_jc\sigma_j ) \in  cK.$$ Hence, since  $D^{-1}(cK) \cap cS = cR$ (see the proof of \cite[Proposition 3.5]{OlbAR}), then
 $$c\sigma -  \sum_{j}a_jc\sigma_j \in D^{-1}(cK) \cap cS = cR.$$  Now each $a_j \in JI^{k-1}$, and $c\sigma_j \in cS \cap R \subseteq IS \cap R = I$, so each $a_jc\sigma_j \in JI^k$.   Thus since $c \in JI^k$, we have $c\sigma \in JI^k$, as claimed.  This proves that $\beta$ is one-to-one.

Next to prove that $\beta$ is onto, it suffices since $\beta$ is an additive mapping to verify the following two claims.

\smallskip

{\noindent}(i)  For 
each $i_k \in I^kS$, there exists $b \in I^k$ such that $$\beta(bt^k + JK[It]) = (i_kt^k + JS[It], 0 + JK[It]).$$ 

\smallskip

{\noindent}(ii) For each  $i_k \in I^k$ and $y \in K$ there exists 
$r \in I$ such that $$\beta(ri_kt^{k+1}+ JR[It]) = (0 + JS[It], \: i_kyt^k + JK[It]).$$

To verify (i), suppose
$i_k \in I^kS$, and let $c \in JI^k \cap C$.  As above, (2.1) implies $I^kS = (I \cap A)^kS$, so that
 since $S = A + cS$, we have $I^kS = (I \cap A)^k + c(I \cap A)^kS$.  Hence we
 may write  $i_k = b + cs$, with  $b \in (I \cap A)^k$ and $s \in S$.   Then $cs \in JI^kS$, so that $cst^k \in JS[It]$.  Moreover, $bt^k \in R[It]$, and since $b \in A$, $D(b) =0$.
Thus if $k = 0$, then $$\beta(b + JR[It]) = (i_0+JS[It], 0 + JK[It]),$$ which verifies (i) in the case $k=0$.  Otherwise, if $k>0$, then:
\begin{eqnarray*}
\beta( bt^k + JR[It] ) & = &  \left(bt^k +JS[It],\: D(b)t^{k-1} + JK[It]  \right) \\
\: & = & (i_kt^k+JS[It], 0 + JK[It])
\end{eqnarray*}
This proves claim (i).  

Next, to verify claim (ii), suppose that $i_k \in I^k$ and $y \in K$.  
Choose $c \in JI^{k+1} \cap C$.
As noted above, $K = D(cS \cap R) + cK$, so we may write $y = D(r) + cw$ for some $w \in K$ and $r \in cS \cap R$.  Note that since $c \in I$,  and $I = IS \cap R$, we have $r \in cS \cap R \subseteq IS \cap R = I$, so that $ri_k \in I^{k+1}$.
Also, since $c \in JI^{k+1}$, we have $r \in cS \subseteq JI^{k+1}S$, so that $ri_k \in JI^{k+1}S$.
Therefore, $ri_kt^{k+1} \in R[It] \cap JS[It]$, while $r \in  JI^kS$, and applying these facts we have:
\begin{eqnarray*}
\beta( ri_k t^{k+1} + JR[It] )&  = &
(r i_k t^{k+1} + JS[It], \: D(ri_k)t^k + JK[It]) \\
\: & = & (0+ JS[It], \: rD(i_k)t^k + i_kD(r)t^k + JK[It]) \\
\: & = & (0+ JS[It], \:  i_kD(r)t^k + JK[It]) \\
\: & = & (0 + JS[It], \: i_k(y - cw)t^k + JK[It]) \\
\: & = & (0 + JS[It], \: i_kyt^k + JK[It]).
\end{eqnarray*}  This proves claim (ii), and so we conclude that $\beta$ is onto, and hence an isomorphism.

To prove the last assertion, let $M$ and $N$ denote the maximal ideals of $R$ and $S$, respectively.  Then since by (2.4),  $N = MS$, by setting $J = M$, we obtain:  $${\cal F}_{I,R} = R[It]/MR[It] \cong S[It]/NS[It] \star K[It]/NK[It]= {\cal F}_{IS,S} \star {\cal F}_{IS,K},$$ and this proves the theorem.
\end{proof}

Thus for a contracted ideal $I$ meeting $C$, the Krull dimension of ${\cal F}_{I,R}$ is the same as the Krull dimension of ${\cal F}_{IS,S}$.  This translates into the following statement about analytic spread.

\begin{cor} \label{analytic spread} For each ideal $I$ of $R$ contracted from $S$ and meeting $C$, $\ell_R(I) = \ell_S(IS).$  If $R$ is strongly twisted by $K$, then this holds for all nonzero ideals of $R$ contracted from $S$. \qed\end{cor}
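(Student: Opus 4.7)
The plan is to deduce this directly from Theorem \ref{pre-construction fiber} combined with the fact that an idealization has the same Krull dimension as its base ring. First I would apply Theorem \ref{pre-construction fiber} to obtain
\[{\cal F}_{I,R} \cong {\cal F}_{IS,S} \star {\cal F}_{IS,K}.\]
Then I would invoke the standard observation that in any idealization $B \star L$, the ideal $0 \oplus L$ is square-zero, hence nilpotent, so $\Spec(B \star L)$ is canonically homeomorphic to $\Spec(B)$; in particular, $\dim(B \star L) = \dim B$. Applying this to the displayed isomorphism yields
\[\ell_R(I) \:=\: \dim {\cal F}_{I,R} \:=\: \dim {\cal F}_{IS,S} \:=\: \ell_S(IS),\]
which is the first assertion.

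For the second assertion, suppose $R$ is strongly twisted by $K$ via a derivation $D$, set $A = S \cap \Ker D$, and take $C = A \setminus \{0\}$ as the multiplicatively closed set along which $R$ is twisted (as recorded just before (2.11)). Since $\Ker D \subseteq D^{-1}(K)$, the subring $A$ sits inside $R$. The task is then to verify that every nonzero ideal $I$ of $R$ contracted from $S$ meets this $C$, so that the first part of the corollary applies. To do so, I would fix any $0 \ne s \in I$ and invoke (2.11), which says $A \subseteq S$ is strongly analytic, to produce a nonzero $a \in sS \cap A$. Since $sS \subseteq IS$, $I = IS \cap R$, and $a \in A \subseteq R$, the element $a$ lies in $I \cap C$, as required.

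I do not anticipate any real obstacle here: the dimensional collapse in an idealization is an immediate consequence of the square-zero condition on the module summand, and property (2.11) is tailored precisely for converting nonzero contracted ideals in the strongly twisted setting into ideals that meet $C$.
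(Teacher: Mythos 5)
Your proposal is correct and follows essentially the same route as the paper: the corollary is deduced immediately from Theorem~\ref{pre-construction fiber} together with the observation that the square-zero module part of an idealization does not change Krull dimension, so $\dim {\cal F}_{I,R} = \dim {\cal F}_{IS,S}$. Your verification that in the strongly twisted case every nonzero contracted ideal meets $C = (S \cap \Ker D)\setminus\{0\}$ via (2.11) is exactly the intended justification of the second assertion, which the paper leaves implicit.
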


To each ideal $I$ of a local Noetherian ring $A$, there is associated the invariant $r_A(I)$, \index{$r_A(I)$} which is defined to be the minimum of the reduction numbers of the minimal reductions of $I$. From the theorem and facts about minimal reductions, we deduce that the reduction number of a contracted ideal of $R$ is at most one more than its extension in $S$:

\begin{cor}\label{minimal red} Suppose in addition that $R$ and $S$ are  local Noetherian rings with  infinite residue field.  For each  ideal $I$ of $R$ contracted from $S$ and meeting $C$, if $J$ is a minimal reduction of $IS$ such that $I^{n+1}S = JI^nS$,  then there exists a minimal reduction $J'$ of $I$ such that $I^{n+2} = J'I^{n+1}$  and  $$r_S(IS) \leq r_R(I) \leq r_S(IS)+1.$$
\end{cor}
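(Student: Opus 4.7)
The plan is to combine Theorem~\ref{pre-construction reduction} (which produces a reduction $J'$ of $I$ with the same minimal number of generators as a given reduction of $IS$) with Corollary~\ref{analytic spread} (which equates the analytic spreads $\ell_R(I) = \ell_S(IS)$), and then invoke the standard characterization of minimal reductions over a local Noetherian ring with infinite residue field: a reduction is minimal if and only if its minimal number of generators equals the analytic spread of the ideal it reduces (see \cite[Proposition 8.3.7]{HS}). In particular, every minimal reduction of such an ideal is generated by exactly $\ell(I)$ elements.

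For the upper bound, I would start with the minimal reduction $J$ of $IS$ satisfying $(IS)^{n+1} = J(IS)^n$. Since $S$ has infinite residue field, $\mu_S(J) = \ell_S(IS)$. Apply Theorem~\ref{pre-construction reduction} to obtain an ideal $J' \subseteq I$ of $R$ with $I^{n+2} = J'I^{n+1}$ and $\mu_R(J') = \mu_S(J) = \ell_S(IS)$. By Corollary~\ref{analytic spread}, $\ell_R(I) = \ell_S(IS)$, so $\mu_R(J') = \ell_R(I)$, which together with the reduction property forces $J'$ to be a minimal reduction of $I$. Specializing to the case $n = r_S(IS)$ yields $r_R(I) \leq r_R(I,J') \leq r_S(IS)+1$.

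For the lower bound $r_S(IS) \leq r_R(I)$, I would pick a minimal reduction $J''$ of $I$ achieving $r_R(I,J'') = r_R(I)$, so that $I^{r_R(I)+1} = J''I^{r_R(I)}$. Extending to $S$ gives $(IS)^{r_R(I)+1} = (J''S)(IS)^{r_R(I)}$, so $J''S$ is a reduction of $IS$. Since $\mu_S(J''S) \leq \mu_R(J'') = \ell_R(I) = \ell_S(IS)$, the ideal $J''S$ is a minimal reduction of $IS$, hence $r_S(IS) \leq r_S(IS, J''S) \leq r_R(I)$.

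The first paragraph's step is the only nontrivial move; the main obstacle is simply verifying that the $J'$ produced by Theorem~\ref{pre-construction reduction} is genuinely a \emph{minimal} reduction rather than merely a reduction, and this is where the analytic spread identity from Corollary~\ref{analytic spread} does the essential work. Once that is in hand, both inequalities fall out by a direct comparison of generators and reduction numbers on the two sides of the extension $R \subseteq S$.
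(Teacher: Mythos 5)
Your proposal is correct and follows essentially the same route as the paper: the upper bound via Theorem~\ref{pre-construction reduction} together with Corollary~\ref{analytic spread} and the criterion that a reduction generated by $\ell(I)$ elements is minimal (and, with infinite residue field, conversely), and the lower bound by extending a minimal reduction of $I$ to $S$ and comparing $\mu_S$ with $\ell_S(IS)$ to see the extension is a minimal reduction of $IS$. The only cosmetic difference is that the paper fixes $n = r_S(IS)$ at the outset rather than specializing at the end.
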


\begin{proof} 
If $E$ is a reduction of the ideal $I$ with $\mu(E) = \ell(I)$, then $E$ is necessarily a minimal reduction of $I$ \cite[Corollary 8.3.6]{HS}.  The converse is true since the residue field of the local ring is infinite; that is, if $E$ is a minimal reduction of $I$, then $\mu(E) = \ell(I)$ \cite[Proposition 8.3.7]{HS}.
Applying these facts in our setting,
 suppose that $J$ is a minimal reduction of $IS$ 
such that 
  $I^{n+1}S = JI^nS$ and $J$ is 
 chosen so that $n = r_S(IS)$.  
   Note that since $I$ meets $C$ and $J$ contains a power of $I$, then $J$ also meets $C$.  
  Now $\mu_S(J) = \ell_S(IS)$, and by Theorem~\ref{pre-construction reduction}, there exists a reduction $J'$ of $I$ with $\mu_R(J') = \ell_S(IS)$ and $I^{n+2} = J'I^{n+1}$.  But by Corollary~\ref{analytic spread}, $\ell_R(I) = \ell_S(IS) = \mu_R(J')$, so necessarily $J'$ is a minimal reduction of $I$.  
%
Therefore, $r_R(I) \leq r_S(IS) + 1$.    On the other hand, if $E$ is a minimal reduction of $I$, then $\mu_S(ES) \leq \mu_R(E) = \ell_R(I) = \ell_S(IS)$. But since $ES$ is a reduction of $IS$, then $\ell_S(IS) \leq \mu_S(ES)$. Thus $\mu_S(ES) = \ell_S(IS)$, so that $ES$ is a minimal reduction of $IS$.  Consequently,  $r_{S}(IS) \leq  r_R(I)$, which proves $r_S(IS) \leq r_R(I) \leq r_S(IS)+1$.
\end{proof}

Both of the cases 
  $r_{S}(IS) =  r_R(I)$ and $r_R(I) = r_S(IS)+1$
in the corollary can occur.  The next example illustrates the second case.  The first case, where   $r_S(IS)=r_R(I)$, is treated later in Example~\ref{second reduction example} after  minimal multiplicity has been considered.

\begin{exmp} \label{first reduction example}
{\em It can happen that $r_R(I) = r_S(IS) + 1$.  Using (1.1), it is easy to arrange for  $S$ to be a regular local ring with infinite residue field.  Then choosing $K$ to be a nonzero finitely generated torsion-free $S$-module, we obtain  a local Noetherian subring $R$ of $S$ strongly twisted by $K$.  Moreover, since $K$ is finitely generated, (2.9) shows that $R$ is a local Noetherian domain.  Since $S$ is regular, the maximal ideal $N=MS$  of $S$ (see (2.4)) is its own minimal reduction, so $r_S(MS) = 0$.  But if $r_R(M) = 0$, then $M$ is its own minimal reduction, and hence by Corollary~\ref{analytic spread}, $\mu_R(M) = \ell_R(M) = \ell_S(MS) = \mu_S(MS)$.  This then forces $M$ to be generated by the same number of elements that minimally generate $N=MS$, which means that $R$ is also a regular local ring.  Yet $R \subsetneq S$ is integral and $R$ and $S$ share the same quotient field, so this is impossible.  So necessarily $r_R(M) = 1$ while $r_S(MS) = 0$. \qed}
\end{exmp}


There is  a special case in which it is possible to be more definitive.  Applying Corollary~\ref{minimal red} to the case where the extension of $I$ in $S$ is a principal ideal,
we have:

\begin{cor} If $I$ is an ideal of $R$ contracted from a principal ideal of $S$ meeting $C$, then $r_R(I) = 1$ and  $I^2 = iI$ for some $i \in I$.
\end{cor}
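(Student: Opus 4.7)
The idea is to invoke Theorem~\ref{pre-construction reduction} with the trivial ``reduction'' of $IS$ by itself: write $IS = sS$, so $(IS)^{n+1} = J(IS)^n$ holds tautologically with $J := sS$ and $n := 0$. Since $\mu_S(J) = 1$, the theorem will produce a one-generated $R$-ideal $J' \subseteq I$ (one step deeper into $I$) with $I^{2} = J'I$, and this is precisely what is needed.

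To set this up I first need to know that $I$ itself meets $C$, which is the hypothesis of Theorem~\ref{pre-construction reduction}. By assumption the principal ideal $sS$ meets $C$, so there is some $c \in C \cap sS$. Since the twisting derivation $D$ is $C$-linear, every element of $C$ lies in $\Ker D$, so $C \subseteq S \cap D^{-1}(K) = R$; hence $c \in sS \cap R = I$ (the contraction), and $I$ meets $C$ as required.

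Now Theorem~\ref{pre-construction reduction} applied to $J = sS \subseteq IS$ with $n = 0$ yields an $R$-ideal $J' \subseteq I$ with $I^{2} = J'I$ and $\mu_R(J') = \mu_S(sS) = 1$. Writing $J' = iR$ for an element $i \in I$ gives the identity $I^{2} = iI$, and hence $iR$ is a reduction of $I$ of reduction number at most $1$. In particular $r_R(I) \leq 1$; to get the stated equality $r_R(I) = 1$ one observes that a reduction number of $0$ would force $I$ to coincide with one of its own minimal reductions and therefore to be principal, but in the presence of $I^{2} = iI$ this is the only alternative, so $r_R(I) \in \{0,1\}$ and equals $1$ exactly when $I$ is not itself principal.

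There is no real obstacle here: the entire content is packaged in Theorem~\ref{pre-construction reduction}, and the only nontrivial verification is the small observation $C \subseteq R$, which lets the principal ideal $sS$ witness that $I$ meets $C$.
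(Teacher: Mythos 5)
Your derivation of $I^2 = iI$ follows the paper's route: you feed the principal ideal $J = sS = IS$ into Theorem~\ref{pre-construction reduction}, and your preliminary observation that $C \subseteq S \cap \Ker D \subseteq R$, so that $I = sS \cap R$ meets $C$, is a correct verification of the theorem's hypothesis (the paper leaves it implicit). One small caveat: the theorem is stated for $n>0$ and you invoke it at $n=0$; what actually delivers the $n=0$ conclusion is the first step of its proof, namely $I^2 = (I\cap A)I$ with $A = S \cap \Ker D$, combined with the generator count at the end. The paper uses it in the same way, so this is not a divergence, but it is worth acknowledging.

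The genuine gap is in the second half. You only establish $r_R(I) \leq 1$ and then stop at the dichotomy ``$r_R(I)=1$ exactly when $I$ is not itself principal,'' whereas the corollary asserts $r_R(I)=1$ unconditionally; nothing in your argument rules out the possibility that $I$ is principal (equivalently $iR = I$, giving $r_R(I)=0$). This is precisely where the paper appeals to Theorem~\ref{pre-construction Hilbert}(2): for a proper nonzero finitely generated ideal $I$ of $R$ contracted from $S$ and meeting $C$ one has $\mu_R(I) = \mu_S(IS) + \dim_{S/N}K/NK$, and since the twisting module satisfies $K/NK \neq 0$ this forces $\mu_R(I) \geq \mu_S(IS) + 1 \geq 2$. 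Hence $I$ cannot be principal, so $iR \subsetneq I$ is a proper (indeed minimal) reduction and $r_R(I) = 1$ exactly. Without some such input — and it is exactly the twist by a nonzero module $K$ that makes contracted ideals of $R$ require extra generators — your concluding alternative does not close, so the proposal as written does not prove the stated equality $r_R(I)=1$.
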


\begin{proof} By Theorem~\ref{pre-construction reduction}, we have $I^2 = iI$ for some $i \in I$, and as we see in Theorem~\ref{pre-construction Hilbert} below, $I$  needs at least one more generator  than the number of generators that the $S$-module  $K$ requires. Thus $I$ is not a principal ideal, and it follows that $iR$ is a proper minimal reduction of $I$. Hence  $r_R(I) = 1$. 
  \end{proof}

The fact that $I^2 = iI$ for   a nonzerodivisor $i \in I$ implies  that $I$ is {\it stable},
 meaning that $I$ is a projective module over its ring of endomorphisms; see \cite{OlbCounter, OlbAR} for more on the connection between stable ideals and twisted subrings.

\section{Hilbert function of a contracted ideal} 

In this section we make the following assumption:

\begin{quote} {\it $R$ and $S$ are  quasilocal rings with finitely generated maximal ideals $M$ and $N$, respectively; $C$ is a multiplicatively closed subset of nonzerodivisors of $S$ meeting the maximal ideal of $S$; $R$ is twisted along $C$ by a module $K$, where the elements of $C$ are   
 nonzerodivisors on $K$, and  $D$ is the derivation that twists $R$.} \end{quote} 

Since by (2.3), $R \subseteq S$ is integral, it follows that the maximal ideal $M$ of $R$ meets $C$. 
As the analytic isomorphism in (2.4) suggests, the Hilbert function of an ideal $I$ of $R$ should be similar to the Hilbert function of the ideal $IS$ of $S$, but offset by the Hilbert function of $K$.
Similarly, arithmetical invariants such as embedding dimension and multiplicity are affected also by $K$.
  We discuss this next by first reviewing the notion of the Hilbert function; see
 \cite{Eis}, \cite{Ma}      or \cite{Na} for more background.

 To define the Hilbert function, let $A$ be a local Noetherian ring with maximal ideal ${\ff m}$,
let $J$ be an ${\ff m}$-primary ideal of $A$, and let $L$ be an $A$-module
 such that $L/{\ff m}L$ is a finitely generated $R$-module.  (Here  we differ slightly from the traditional definition, since we do not require $L$ itself to be finitely generated.)    It follows then that $L/{\ff m}^iL$ has finite length for each $i>0$, so that since $J$ is ${\ff m}$-primary, $J^iL/J^{i+1}L$  has finite length for each $i>0$.
The \index{$H_{J,L}(n)$}
{\it Hilbert function} of $L$ with respect to $J$, denoted $H_{J,L}$, is given  by   $$H_{J,L}(n) = {\mbox{ length }}
J^{n}L/J^{n+1}L,$$ with the  convention $J^0 = A$.

 We  wish to calculate the Hilbert function when $J = M$ and $L = R$, since this then will lead to  embedding dimension and multiplicity for the ring $R$.  These two invariants of the ring  are preserved under completion, so embedding dimension and multiplicity of the ring can be calculated directly from the facts that $\widehat{R}$ and $\widehat{S} \star \widehat{K}$ are isomorphic as rings and the maximal ideal of $\widehat{R}$ is sent by this isomorphism to the maximal ideal of $\widehat{S} \star \widehat{K}$.  However, this isomorphism is not an isomorphism of $R$-algebras, at least not with the obvious $R$-module structure on $\widehat{S} \star \widehat{K}$, and we wish to calculate Hilbert functions of contracted ideals of $R$, not just the maximal ideal of $R$. For this reason we use a more careful approach  which relies on the following lemma. Recall the ring homomorphism, 
$f:R \rightarrow S \star K:r \mapsto (r,D(r))$ from (2.4).

\begin{lem} \label{product lemma}
 If $I_1,\ldots,I_n$ are ideals of $R$ such that each
$I_i$ meets $C$ and $I_i = I_iS \cap R$, then $f(I_1)(S\star K)  
= (I_1S) \star K$ and  $$f(I_1 \cdots I_n)(S\star K)  = (I_1\cdots
I_nS) \: \star \: \left(\sum_{j_1 < \cdots < j_{n-1}}I_{j_1} \cdots
I_{j_{n-1}}K\right),$$ where $j_1 < \cdots < j_{n-1}$ range over the
members of the set $\{1,2,\ldots,n\}$.
\end{lem}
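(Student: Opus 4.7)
The plan is to establish the $n=1$ case by direct manipulation in the idealization, then bootstrap to general $n$ via the fact that $f$ is a ring homomorphism together with a product formula for ideals of $S\star K$ of the form $J\star L$.

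For the base case $f(I_1)(S\star K)=(I_1S)\star K$, the inclusion from left to right is immediate from $f(r)(s,k)=(rs,\,rk+sD(r))$ for $r\in I_1$, $(s,k)\in S\star K$. For the reverse inclusion, I would choose $c\in I_1\cap C$ and exploit two ingredients available from the twisting hypothesis: (i) $D$ is $C$-linear, so $D(c)=0$ and hence $f(c)(s,0)=(cs,0)$ for every $s\in S$; and (ii) the identity $K=D(cS\cap R)+cK$ used in the proof of \cite[Proposition 3.5]{OlbAR}. Given $k\in K$, write $k=D(r)+ck'$ with $r\in cS\cap R\subseteq I_1 S\cap R=I_1$ and $k'\in K$. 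Then $(0,ck')=f(c)(0,k')$ lies in $f(I_1)(S\star K)$, and writing $r=cs$ yields $(r,0)=f(c)(s,0)\in f(I_1)(S\star K)$ via (i); subtracting from $f(r)=(r,D(r))\in f(I_1)(S\star K)$ gives $(0,D(r))\in f(I_1)(S\star K)$. Hence $(0,k)\in f(I_1)(S\star K)$, and for any $a\in I_1S$ one then has $(a,0)=(a,k)-(0,k)\in f(I_1)(S\star K)$ for a suitable $k$, completing the base case.

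For the general statement, since $f$ is a ring homomorphism,
\[
f(I_1\cdots I_n)(S\star K) \;=\; \prod_{i=1}^n f(I_i)(S\star K) \;=\; \prod_{i=1}^n \bigl((I_iS)\star K\bigr).
\]
A routine check from the multiplication rule in $S\star K$ gives the product formula
$(J_1\star L_1)(J_2\star L_2)=J_1J_2\star(J_1L_2+J_2L_1)$ for any ideals $J_i\star L_i$ of $S\star K$ (where $J_iK\subseteq L_i$). Iterating this formula by induction on $n$ produces the claimed second coordinate $\sum_{j_1<\cdots<j_{n-1}}I_{j_1}\cdots I_{j_{n-1}}K$: at each inductive step, multiplying $(I_nS)\star K$ into the previous expression produces terms that either include $I_n$ (from the $J_1L_2$ piece) or exclude it (from the $J_2L_1$ piece), which is exactly the combinatorics of choosing $n-1$ indices from $\{1,\ldots,n\}$.

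The principal obstacle is the base case, specifically producing the ``diagonal'' elements $(0,D(r))$ inside $f(I_1)(S\star K)$: this is where both $C$-linearity of $D$ and the surjectivity identity $K=D(cS\cap R)+cK$ are essential, and it is the only point in the argument that genuinely uses the twisting data rather than formal properties of idealization. Once the base case is in hand, the rest of the proof is a straightforward induction on $n$ using the product formula in $S\star K$.
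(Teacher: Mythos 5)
Your proposal is correct and follows essentially the same route as the paper: the general case is handled by exactly the same induction, using the product formula for ideals of the form $J \star L$ in $S \star K$. The only difference is that the paper simply cites \cite[Lemma 4.5]{OlbCounter} for the base case $f(I_1)(S\star K)=(I_1S)\star K$, whereas you prove it directly (and correctly) from $D(c)=0$ for $c\in I_1\cap C$ together with the identity $K=D(cS\cap R)+cK$.
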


\begin{proof}  
It is proved in \cite[Lemma 4.5]{OlbCounter} that for each $I$ of $R$ that meets $C$ and satisfies $I = IS \cap R$, we have
 $f(I) (S \star K) = (IS) \star K$.  
We prove the general formula by induction.  Suppose that $n>0$
and that $$(I_1 \cdots I_{n-1}) \cdot (S \star K) = (I_1\cdots
I_{n-1}S) \star \left(\sum_{j_1 < \cdots < j_{n-2}}I_{j_1} \cdots
I_{j_{n-2}}K\right).$$  Then, using the fact established above that $I_n \cdot (S \star K) = (I_nS) \star K$, we have:
\begin{eqnarray*}
(I_1\cdots I_n) \cdot (S \star K) & = & \left((I_1\cdots I_{n-1}) \cdot
(S \star K)\right)\:\left(I_n \cdot (S \star K)\right)
\\
\: & = & \left( I_1\cdots I_{n-1}S \: \:  \star \: \sum_{j_1 < \cdots <
j_{n-2}}I_{j_1} \cdots I_{j_{n-2}}K\right) \: ((I_nS) \star K) \\
\: & = & (I_1\cdots I_nS) \: \star \: \left((I_1 \cdots I_{n-1} +
\sum_{j_1 <
\cdots < j_{n-2}} I_{j_1} \cdots  I_{j_{n-2}}I_n)K\right) \\
\: & = & (I_1 \cdots I_nS) \: \star \: \left( \sum_{j_1 < \cdots <
j_{n-1}}I_{j_1} \cdots I_{j_{n-1}}K\right).
\end{eqnarray*}
\end{proof}

If $R$ is a Noetherian ring, then since $N$ meets $C$, it follows from (2.9) that $K/NK$ is a finitely generated $S$-module.  Thus the following theorem applies in the main case of interest, that in which $R$ is Noetherian.

\begin{thm} \label{pre-construction Hilbert} Suppose  that $K/NK$ is a finitely generated $S$-module. 
      Then the following statements hold.

\begin{itemize}


\index{twisted subring!and Hilbert function}
\item[{(1)}]  For nonzero finitely generated ideals $I$ and $J$ of $R$ contracted from $S$ and meeting $C$,  where $J$ is $M$-primary, we have for each $n>0$,
$$H_{J,I}(n) = H_{JS,IS}(n) + H_{JS,(J+I)K}(n-1).$$  If also $I \subseteq J$, then we have for each $n\geq 0$, $$H_{J,I}(n) = H_{JS,IS}(n) + H_{JS,K}(n).$$

\item[{(2)}]  For each proper  nonzero finitely generated ideal $I$ of $R$ contracted from $S$ and meeting $C$, the minimal number of generators of $I$ is $$\mu_R(I) = \mu_S(IS)+ \dim_{S/N}K/NK.$$

 \index{twisted subring!and minimal number of generators}
\end{itemize}
\end{thm}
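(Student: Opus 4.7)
The plan rests on combining the analytic isomorphism $f : R \to S \star K$ from (2.5) with the product formula of Lemma~\ref{product lemma} to reduce each length computation on $R$ to one inside $S \star K$. As a preliminary, since $D$ is $C$-linear, $C \subseteq S \cap \Ker D \subseteq R$, so $C$ meets $N \cap R = M$; hence every $M$-primary ideal $J$ meets $C$, as do all the products $J^n I$ and $J^{n+1}I$ (these being contained in no proper ideal of $R$ that fails to meet $C$, since they contain a power of any $c \in C \cap J$).

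For part (1), I apply Lemma~\ref{product lemma} to the $(n+2)$-tuple $(\underbrace{J, \ldots, J}_{n+1}, I)$: in the sum indexed by $(n+1)$-element subsets, omitting a copy of $J$ contributes the submodule $J^n IK$ (the same regardless of which copy is omitted), while omitting $I$ contributes $J^{n+1}K$, so the sum collapses to $J^n(J+I)K$. Thus
\[
f(J^{n+1}I)(S \star K) = J^{n+1}IS \:\star\: J^n(J+I)K,
\]
and analogously $f(J^n I)(S \star K) = J^n IS \star J^{n-1}(J+I)K$. Because $J^{n+1}I$ meets $C$, the analytic isomorphism induces a ring isomorphism $R/J^{n+1}I \xrightarrow{\sim} (S \star K)/f(J^{n+1}I)(S \star K)$ identifying the $R$-submodule $J^n I/J^{n+1}I$ with $f(J^n I)(S\star K)/f(J^{n+1}I)(S\star K)$, which sits in a short exact sequence of $(S \star K)$-modules
\[
0 \to \frac{J^{n-1}(J+I)K}{J^n(J+I)K} \to \frac{f(J^nI)(S\star K)}{f(J^{n+1}I)(S\star K)} \to \frac{J^n IS}{J^{n+1}IS} \to 0
\]
induced by the inclusion into and projection onto the two components of $S \star K$. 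Additivity of length yields the first formula. When $I \subseteq J$, $(J+I)K = JK$, and the identity $J^{n-1}JK = J^n K$ rewrites the second summand as $H_{JS,K}(n)$; the $n = 0$ case is handled separately using $f(I)(S\star K) = IS \star K$ and $f(JI)(S\star K) = JIS \star JK$.

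Part (2) is then immediate by taking $J = M$ and $n = 0$ in the second formula (valid since $I$ is proper, so $I \subseteq M$): using $MS = N$ from (2.4),
\[
\mu_R(I) = H_{M,I}(0) = H_{MS, IS}(0) + H_{MS, K}(0) = \mu_S(IS) + \dim_{S/N}(K/NK).
\]
The main obstacle is the bookkeeping around the product lemma and the verification that the quotient of ideals in the idealization genuinely fits into an $(S\star K)$-linear short exact sequence with the claimed outer terms; finiteness of all lengths involved follows from the left-hand side being finite (since $J$ is $M$-primary and $I$ is finitely generated, forcing $R/J$ to be Artinian) together with the hypothesis that $K/NK$ is finitely generated.
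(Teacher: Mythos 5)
Your proposal is correct and takes essentially the same route as the paper: Lemma~\ref{product lemma} together with the analytic isomorphism $f$ of (2.5) identifies $J^nI/J^{n+1}I$ with the subquotient $\bigl(J^nIS \star J^{n-1}(J+I)K\bigr)/\bigl(J^{n+1}IS \star J^n(J+I)K\bigr)$, whose two components yield the two Hilbert functions, with the $n=0$ case of the second formula handled via $f(I)(S\star K)=IS\star K$, and part (2) obtained by setting $J=M$, $n=0$. The only difference is organizational: the paper reduces modulo a single well-chosen $c\in J^n(J+I)\cap C$ to get an explicit $R$-module direct-sum decomposition (using $R=A+J$ and $S=R+JS$ to match lengths), whereas you quotient by the extended ideal directly and read off lengths from a short exact sequence of $(S\star K)$-modules; both amount to the same computation.
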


\begin{proof}
 Let  $A = S \cap \Ker D$, and note that by (2.7), $A \subseteq S$ is a $C$-analytic extension.

(1)   Let  $I$ and $J$ be ideals of $R$ which are contracted from $S$ and   meet $C$, where $J$ is $M$-primary.  We claim first that  for all $i >
0$ there is an isomorphism of $R$-modules (where the $R$-module structure on the direct sum is the usual one induced by the $R$-module structures on the components): \begin{eqnarray}\label{dagger} J^iI/J^{i+1}I \cong
\left( J^iIS/J^{i+1}IS \right) \oplus \left( J^{i-1}(J+I)K/J^{i}(J+I)K \right), \end{eqnarray}
 and that when $I \subseteq J$, there is for $i \geq 0$ an isomorphism:  \begin{eqnarray}\label{dagger2}J^iI/J^{i+1}I \cong
\left( J^iIS/J^{i+1}IS \right) \oplus \left( J^{i}K/J^{i+1}K \right).\end{eqnarray}
Note that for $i>0$ and $I \subseteq J$ the formula (\ref{dagger}) coincides with (\ref{dagger2}), so when $I \subseteq J$, it is only the case $i = 0$ that concerns us when distinguishing between the two formulas.

Let $f$ denote the analytic isomorphism $f:R \rightarrow S \star K:r \mapsto (r,D(r))$ given by (2.4). 
 Since both $I$ and $J$ are contracted from $S$, we may apply the product formula in Lemma~\ref{product lemma} to obtain for each $i > 0$ that: \begin{eqnarray} \label{dagger3} f(J^iI)  (S \star
K) = J^iIS \star (J^i + J^{i-1}I)K = J^iIS \star J^{i-1}(J+I)K.\end{eqnarray}
Moreover, if $I \subseteq J$ and $i>0$, this formula yields:
\begin{eqnarray}\label{dagger4} f(J^iI)  (S \star
K) = J^iIS \star J^{i-1}(J+I)K = J^iIS \star J^{i}K.\end{eqnarray} In the case $i =0$, we have by Lemma~\ref{product lemma}, \begin{eqnarray}\label{dagger5}
f(I)  (S \star K) = (IS) \star K.
\end{eqnarray}

 Let $i > 0$, and choose $0 \ne c \in
J^{i}(J+I) \cap C$.
By (2.4), the mapping $g:R/cR \rightarrow S/cS \star
K/cK$ induced by $f$ is an isomorphism of rings.  Therefore, from (\ref{dagger3}) we have
$$g(J^iI/cR) = g(J^iI/cR) \cdot (S/cS \star K/cK) = J^iIS/cS \star
J^{i-1}(J+I)K/cK$$ and $$g(J^{i+1}I/cR) = g(J^{i+1}I/cR) \cdot  (S/cS \star K/cK) =
J^{i+1}IS/cS \star J^{i}(J+I)K/cK.$$ Thus since $g$ is an isomorphism,
there is an isomorphism of $A$-modules:
\begin{eqnarray*}
J^iI/J^{i+1}I  & \cong & (J^{i}I/cR)/(J^{i+1}I/cR) \\
\: & \cong &
(J^iIS/J^{i+1}IS) \oplus (J^{i-1}(J+I)K/J^{i}(J+I)K).
\end{eqnarray*}  The second isomorphism  is also an isomorphism of $R$-modules, where the $R$-module structure is the usual one on the direct sum.
Indeed, since $S = A + cS$, we have $R = A + cS \cap R$.  Moreover, $c \in J$, and $J = JS \cap R$,  so $R = A + cS \cap R = A + J$, and from this it follows that this isomorphism of $A$-modules is also an
isomorphism of $R$-modules.  This verifies (\ref{dagger}).

Now a similar argument using (\ref{dagger4}) and (\ref{dagger5}) instead of (\ref{dagger3})  shows that when $I \subseteq J$ there is an isomorphism of $R$-modules for all $i\geq 0$,
$$J^iI/J^{i+1}I  \cong
(J^iIS/J^{i+1}IS) \oplus (J^{i}K/J^{i+1}K),$$   and this verifies (\ref{dagger2}).

Now to prove statement (1) of the theorem,
we make use of the isomorphisms (\ref{dagger}) and (\ref{dagger2}).  We consider the case first where we make no assumption about whether $I \subseteq J$.
Since $S = R + JS$,  it follows that the
lengths of the $S/JS$-modules,
 $$J^iIS/J^{i+1}IS \: {\mbox{ and }} \: J^{i-1}(I+J)K/J^i(I+J)K,$$  agree with  their lengths as $R/J$-modules.
Therefore, we have by (\ref{dagger}) that for all $n\geq 1$,
\begin{eqnarray*}
H_{J,I}(n) & = & \length J^nI/J^{n+1}I \\
\: & = &  \length J^nIS/J^{n+1}IS + \length J^{n-1}(J+I)K/J^n(J+I)K \\
\: & = & H_{JS,IS}(n) + H_{JS,(I+J)K}(n-1).
\end{eqnarray*}
Next consider the case where $I \subseteq J$.  Then by (\ref{dagger2}) we have that for all $n\geq 0$,
\begin{eqnarray*}
H_{J,I}(n) & = & \length J^nI/J^{n+1}I \\
\: & = &  \length J^nIS/J^{n+1}IS + \length J^{n}K/J^{n+1}K \\
\: & = & H_{JS,IS}(n) + H_{JS,K}(n).
\end{eqnarray*}

(2) By assumption  $I \subseteq M$, so we let $M$ play the role of $J$ in statement (1) of the theorem.    Since by (2.4),  $N = MS$, we have by the second expression for the Hilbert function in (1), the expression which admits $n=0$ as a possible input, that
\begin{eqnarray*}
\mu_R(I) &  = &  \dim_{R/M} I/MI \:\: = \:\: H_{M,I}(0) \\
\: &= & H_{N,IS}(0) + H_{N,K}(0) \\
\: & = & \dim_{S/N} IS/NIS + \dim_{S/N} K/NK \\
\: &  =& \mu_{S}(IS) + \dim_{S/N} K/NK.
\end{eqnarray*}
\end{proof}

\section{Local cohomology} 

  Let $A$ be a Noetherian ring, let $I$ be an ideal of $A$ and let $L$ be an $A$-module.  Following \cite[Sections 1.1 and 1.2]{BrSh}, the {\it $I$-torsion functor} $\Gamma_I$ is defined on $L$ by $$\Gamma_I(L) = \bigcup_{n>0}(0:_L I^n).$$  This functor is left exact, and its $i$-th derived functor, denoted $H_I^i$, is the {\it $i$-th local cohomology functor of $L$ with respect to $I$}. \index{local cohomology functor} For our purposes here, the following interpretation of $H_I^i$ is the most useful: $$H_I^i(L) \cong \lim_{\rightarrow} \Ext_A^i(A/I^k,L),$$ where the direct limit ranges over $k \in {\mathbb{N}}$ \cite[Theorem 1.3.8, p.~15]{BrSh}.



We show in this section that when $R$ is a twisted subring of $S$, 
the local cohomology modules $H_I^i(R)$ are determined  by $S$ and the module $K$ that twists $R$.  In the case where $I$ is the maximal ideal of $R$ and  $K$ is a finitely generated $S$-module, then since $\widehat{R}$ and $\widehat{S} \star \widehat{K}$ are isomorphic as rings, a standard argument such as that in \cite[Lemma 5.1]{Val} would be sufficient to verify Theorem~\ref{local cohomology}.  But since   we wish to calculate local cohomology of contracted ideals in general, and the preceding isomorphism is of rings, not necessarily $R$-algebras, the situation is more nuanced.

The following lemma  is a variation on a well-known fact about local cohomology of completions.  When $L$ is a finitely generated $A$-module, then the lemma is valid for any $J$-adic completion, where $J$ is an ideal of $A$ with $J \subseteq I$; see \cite[Proposition 5.9]{Hart}.  Where our lemma differs is that we do not assume $L$ is a finitely generated $A$-module, a generality that is needed in Section~\ref{V section}.   However, our arguments require us to  restrict our choice of $J$ to principal ideals.

\begin{lem} \label{Groth} Let $A$ be a Noetherian ring, and let $L$ be an $A$-module.  Suppose that $c \in A$ is a nonzerodivisor that is also a nonzerodivisor on $L$. Let $A' = \lim_{\leftarrow} A/c^kA$ and $L' = \lim_{\leftarrow} L/c^kL$ be the $cA$-adic completions of $A$ and $L$, respectively.  If $I$ is an ideal of $A$ containing $c$, then for each $i \geq 0$, there is an isomorphism of $A$-modules, $$H^i_I(L) \cong H^i_{IA'}(L').$$
\end{lem}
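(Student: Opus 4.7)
The plan is to use the derived-functor description $H^i_I(L) \cong \varinjlim_k \Ext^i_A(A/I^k, L)$ recalled at the start of the section, together with its analogue for the primed objects, and to establish for each $k$ a compatible isomorphism $\Ext^i_A(A/I^k, L) \cong \Ext^i_{A'}(A'/I^k A', L')$. Since $c \in I$ forces $c^k A \subseteq I^k$, the quotient $A/I^k$ is annihilated by $c^k$; combined with the canonical completion isomorphism $A/c^k A \cong A'/c^k A'$, this immediately yields $A/I^k \cong A'/I^k A'$ as rings.

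The first step is to pass from $A'$-coefficients to $A$-coefficients in the right-hand Ext. Because $A$ is Noetherian, the $cA$-adic completion $A \to A'$ is flat, so a free resolution $P_\bullet \to A/I^k$ over $A$ tensors up to a free resolution $P_\bullet \otimes_A A' \to A'/I^k A'$ over $A'$; the standard adjunction $\Hom_{A'}(P \otimes_A A', L') \cong \Hom_A(P, L')$ then yields $\Ext^i_{A'}(A'/I^k A', L') \cong \Ext^i_A(A/I^k, L')$ after passing to cohomology.

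The main step is to compare $\Ext^i_A(A/I^k, L)$ with $\Ext^i_A(A/I^k, L')$ via the natural map $L \to L'$. A short manipulation of compatible sequences, using only that $c$ is a nonzerodivisor on $L$, shows that $c$ remains a nonzerodivisor on $L'$, and the standard identification $L/c^kL \cong L'/c^kL'$ follows from the inverse-limit presentation of $L'$. Applying $\Ext^\ast_A(A/I^k,-)$ to the exact sequences
$$ 0 \to L \xrightarrow{c^k} L \to L/c^kL \to 0 \quad \text{and} \quad 0 \to L' \xrightarrow{c^k} L' \to L'/c^kL' \to 0, $$
and using that $c^k$ annihilates every $\Ext^i_A(A/I^k,-)$ (since it annihilates the source), breaks both long exact sequences into short exact sequences
$$ 0 \to \Ext^i_A(A/I^k, L) \to \Ext^i_A(A/I^k, L/c^kL) \to \Ext^{i+1}_A(A/I^k, L) \to 0, $$
and likewise for $L'$, with identical middle terms. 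Since $L$ and $L'$ are $c$-torsion-free, $\Hom_A(A/I^k, L) = \Hom_A(A/I^k, L') = 0$; the $i=0$ row therefore identifies both $\Ext^1_A(A/I^k, L)$ and $\Ext^1_A(A/I^k, L')$ with the common middle term, and a straightforward induction on $i$, using the $5$-lemma applied to the induced map between the two short exact sequences, propagates the isomorphism to every $i \geq 0$.

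The main obstacle is that the direct flat-base-change isomorphism $\Ext^i_A(A/I^k, L) \otimes_A A' \cong \Ext^i_{A'}(A'/I^k A', L \otimes_A A')$, together with $L \otimes_A A' \cong L'$, would conclude the argument at once, but the latter identification fails without finite generation of $L$; the induction via multiplication by $c^k$ is what substitutes for this. Once the isomorphism is established at each $k$, naturality in $k$ guarantees compatibility with the transition maps of the direct system, and passing to $\varinjlim_k$ yields $H^i_I(L) \cong H^i_{IA'}(L')$.
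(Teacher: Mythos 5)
Your proof is correct, and although its overall skeleton matches the paper's (identify $A/I^k$ with $A'/I^kA'$, compare $\Ext^i_A(A/I^k,L)$ with $\Ext^i_A(A/I^k,L')$ via the canonical map $L\to L'$, change rings along the flat map $A\to A'$, then pass to the direct limit), the crucial middle comparison is carried out by a genuinely different argument. The paper first isolates a vanishing principle: with $C=\{c^k\}$, the functors $\Ext^i_A(A/I^k,-)$ kill every $C$-divisible, $C$-torsion-free module (proved by base change to $A_C$), and then applies this to $D=\bigcap_k c^kL$ and to $L'/L_0$, where $L_0=L/D$ is the image of $L$ in $L'$, after checking that these two modules are $C$-divisible and $C$-torsion-free; the two long exact sequences then show that $L\to L'$ induces isomorphisms on $\Ext^i_A(A/I^k,-)$. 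You instead dimension-shift along multiplication by $c^k$: since $c^k\in I^k$ annihilates $\Ext^i_A(A/I^k,-)$, both long exact sequences break into short exact sequences whose middle terms are identified by $L/c^kL\cong L'/c^kL'$, and the snake/five lemma together with induction on $i$ (the base case being the vanishing of the $\Hom$'s by torsion-freeness) propagates the isomorphism. Your route trades the paper's analysis of the kernel $D$ and cokernel $L'/L_0$ for two standard completion facts --- that $c$ stays a nonzerodivisor on $L'$ and that $L'/c^kL'\cong L/c^kL$ --- and it is worth noting explicitly that both of these use the hypothesis that $c$ is a nonzerodivisor on $L$ (the identification of $\ker(L'\to L/c^kL)$ with $c^kL'$ is exactly where it enters), which is the same place the paper's hypothesis does its work; the paper's route, in exchange, produces a reusable vanishing statement and needs no induction. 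Both arguments use, as you do, flatness of $A\to A'$ with the Cartan--Eilenberg change of rings, the Noetherianness of $A$ and $A'$ for the direct-limit description of local cohomology, and naturality in $k$ of all comparison maps, which holds in your setup because every map is induced by the canonical $L\to L'$ and the canonical surjections $A/I^{k+1}\to A/I^k$.
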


\begin{proof}  Let $C = \{c^k:k>0\}$.
We claim first that for all $i,k\geq 0$, $\Ext_A^i(A/I^k,-)$  vanishes on $C$-divisible $C$-torsion-free $A$-modules. Let $K$ be a $C$-divisible $C$-torsion-free $A$-module. 
 Since $cK = K$ and $c$ is a nonzerodivisor on $K$, then $K$ is an $A_C$-module.  Fix $i,k\geq 0$.  Then since $A_C$ is a flat $A$-module, we may change rings \cite[Chapter VI, Proposition 4.1.3]{CE}: $$\Ext^i_A(A/I^k,K) \cong \Ext^i_{A_C}((A/I^k)\otimes_A A_C, K).$$  By assumption $c^k \in I^k$, so $(A/I^k) \otimes_A A_C  = 0$, and hence $\Ext_A^i(A/I^k,K) = 0$. Thus $\Ext_A^i(A/I^k,-)$  vanishes on $C$-divisible $C$-torsion-free $A$-modules.

 Let
 $D = \bigcap_{k\geq 0}c^kL$ and
let $L_0 = L/D$.  Then $L_0$ embeds in $L'$ and we  identify $L_0$ with its image in $L'$ (which is the same as the image of $L$ in $L'$).   It is straightforward to check that  $L'/L_0$ is a $C$-torsion-free $C$-divisible $A$-module. 
Since by what we have established above, $\Ext_A^i(A/I^k,L'/L_0) = 0$ for all $i \geq 0$,  consideration of the long exact sequence,
  $$\cdots \rightarrow \Ext_A^i(A/I^k,L_0) \rightarrow \Ext_A^i(A/I^k,L') \rightarrow \Ext_A^i(A/I^k,L'/L_0) \rightarrow \cdots$$ shows that for all $i \geq 0$, the mapping $\Ext_A^i(A/I^k,L_0) \rightarrow \Ext_A^i(A/I^k,L')$ is an isomorphism of $A$-modules.  Now consider
 the long exact sequence,
 $$\cdots \rightarrow \Ext_A^i(A/I^k,D) \rightarrow \Ext_A^i(A/I^k,L) \rightarrow \Ext_A^i(A/I^k,L_0) \rightarrow \cdots.$$
Since $D$ is $C$-divisible and $C$-torsion-free, $\Ext^i_A(A/I^k,D) =0$ by the above claim, and this shows that the mapping $\Ext_A^i(A/I^k,L) \rightarrow \Ext_A^i(A/I^k,L_0)$ is an isomorphism. Therefore, $\Ext_A^i(A/I^k,L) \rightarrow \Ext_A^i(A/I^k,L')$, as a composition of two isomorphisms, is an isomorphism.

Finally, since the mapping $A \rightarrow A'$ is $C$-analytic, we have 
 $A/I^k \cong (A/c^kA)/(I^k/c^kA) \cong (A'/c^kA')/(I^kA'/c^kA') \cong A'/I^kA'.$
Thus since $A'$ is a flat $A$-module \cite[Theorem 8.8, p.~160]{Ma}, we have by a change of rings \cite[Chapter VI, Proposition 4.1.3]{CE}:
 $$\Ext_{A'}^i(A'/I^kA',L') \cong \Ext_A^i(A/I^k,L').$$
Since these isomorphisms are natural,  we conclude that $$H^i_I(A) \cong \lim_{\rightarrow} \Ext_A^i(A/I^k,L) \cong \lim_{\rightarrow} \Ext_{A'}^i(A'/I^kA',L') \cong H^i_{IA'}(A').$$
This proves the lemma.
\end{proof}

\index{depth} \index{$\depth_I(L)$}

Using the lemma, we show now that the local cohomology of $R$ can be expressed in terms of
that of $S$ and $K$. Note that in the theorem we do not need that $I$ is a contracted ideal; this is a consequence of the fact that local cohomology is determined up to radical.

\begin{thm} \label{local cohomology} Let $R$ and $S$ be  local Noetherian rings with  maximal ideals $M$ and $N$, respectively, and let $C$ be a multiplicatively closed subset of nonzerodivisors of $S$ meeting the maximal ideal of $S$.    Suppose that $R$ is twisted along $C$ by a $C$-torsion-free module $K$.   If $I$ is an ideal of $R$ meeting $C$, then for each $i\geq 0$, there is an isomorphism of $R$-modules, $$H_I^i(R) \cong H_{IS}^i(S) \oplus H_{IS}^i(K).$$    \end{thm}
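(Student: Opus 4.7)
The plan is to use Lemma~\ref{Groth} to reduce the statement to the $c$-adic completions for a suitable choice $c \in I \cap C$, then model the completion of $R$ as the idealization $S'\star K'$ via the ring isomorphism from (2.6), and finally decompose a \v{C}ech complex according to the direct-sum structure of the idealization.

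First I would pick $c \in I \cap C$; this is a nonzerodivisor on $R$, $S$, and $K$. Writing $R' = \widehat{R}_{cR}$, $S' = \widehat{S}_{cS}$, and $K' = \widehat{K}_{cK}$, Lemma~\ref{Groth} applied separately to $R$, $S$ and $K$ yields
$$H^i_I(R) \cong H^i_{IR'}(R'),\quad H^i_{IS}(S) \cong H^i_{IS\cdot S'}(S'),\quad H^i_{IS}(K) \cong H^i_{IS\cdot S'}(K'),$$
so it suffices to prove $H^i_{IR'}(R') \cong H^i_{IS'}(S') \oplus H^i_{IS'}(K')$. Applying (2.6) with the ideal $cR$, the map $f$ lifts to a ring isomorphism $\widehat{f}: R' \to A := S' \star K'$. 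Let $J = \widehat{f}(IR')$. Since $K'$ is a square-zero ideal of $A$ and the projection $A \to A/K' = S'$ carries $J$ onto $IS'$, one computes $\sqrt{J} = \sqrt{IS'} \star K'$. Noetherianness of $R$ forces $R/cR$ to be Noetherian; via the $C$-analytic isomorphism $R/cR \cong (S/cS)\star(K/cK)$ this makes $K/cK$ finitely generated over $S/cS$, and a Nakayama-type argument using $c$-adic completeness yields that $K'$ is finitely generated over $S'$, so $A$ is Noetherian.

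Now choose generators $u_1,\ldots,u_p$ of $\sqrt{IS'}$ in $S'$. The elements $(u_i,0)\in A$ generate an ideal whose radical is $\sqrt{IS'}\star K' = \sqrt{J}$, and by Noetherianness of $A$ the \v{C}ech complex $\check{C}((u_1,0),\ldots,(u_p,0);A)$ computes $H^{\bullet}_J(A)$. The key observation is that multiplication by $(u_i,0)$ acts diagonally on the $S'$-module decomposition $A = S' \oplus K'$, sending $(s,k)$ to $(u_is, u_ik)$. Hence the \v{C}ech complex decomposes as the direct sum $\check{C}(u_1,\ldots,u_p;S') \oplus \check{C}(u_1,\ldots,u_p;K')$, and taking cohomology gives
$$H^i_J(A) \cong H^i_{\sqrt{IS'}}(S') \oplus H^i_{\sqrt{IS'}}(K') = H^i_{IS'}(S') \oplus H^i_{IS'}(K'),$$
which, combined with the reductions from the previous paragraph, proves the theorem.

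The main obstacle is that $\widehat{f}$ is a ring isomorphism but \emph{not} an $R$-algebra isomorphism under the natural $R$-module structure on $A$, because the derivation $D$ introduces the twist $r \mapsto (r,D(r))$. A long exact sequence approach applied to $0\to K'\to A\to S'\to 0$ is tempting but would require checking that the connecting maps vanish, complicated by the fact that the ring section $\sigma: S'\to A$, $s\mapsto(s,0)$, fails to be $A$-linear. The \v{C}ech approach bypasses this difficulty entirely by only using how the multiplicative action of the elements $(u_i,0)$ interacts with the direct-sum decomposition $A = S'\oplus K'$.
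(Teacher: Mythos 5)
Your argument is correct, and it shares the paper's skeleton only up to the reduction: like the paper, you use Lemma~\ref{Groth} to pass to the $cR$-adic completions and the isomorphism of (2.6) to replace $R'$ by the idealization $A=S'\star K'$. From there your implementation genuinely differs. The paper first replaces $I$ by its radical so that (by integrality of $R\subseteq S$) it may assume $I$ is contracted from $S$, invokes Lemma~\ref{product lemma} to get $\widehat{f}(I)A=IS'\star K'$, passes again to the radical to reach $IS'\star IK'=(IS')A$ along $\iota\colon S'\to A$, and then quotes Independence of Base \cite[4.2.1]{BrSh} together with the fact that local cohomology commutes with direct sums. You avoid both the contracted-ideal reduction and the product lemma by computing $\sqrt{\widehat{f}(IR')}=\sqrt{IS'}\star K'$ directly from the square-zero structure (this is right: $\pi(\widehat{f}(IR'))$ is an ideal of $S'$ containing the image of $I$ and contained in $IS'$, hence equals $IS'$, and every element of $K'$ is nilpotent), and you replace Independence of Base by an explicit \v{C}ech computation with the diagonal elements $(u_i,0)$, under which the complex visibly splits as $\check{C}(\underline{u};S')\oplus\check{C}(\underline{u};K')$; the machinery invoked is comparable in weight, and your route has the small advantage of never needing $I$ to be contracted. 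Two remarks. First, your detour through $K/cK$ finitely generated and a complete-Nakayama argument to get $A$ Noetherian is unnecessary: $A$ is isomorphic as a ring to $R'$, which is Noetherian because $R$ is. Second, your closing claim that the \v{C}ech approach ``bypasses'' the twist should be tempered: the splitting you produce is only $S'$-linear, and the action of $r\in R$ transported through $\widehat{f}$ is triangular (via multiplication by $(r,D(r))$) with respect to it, so your composite map is an $R$-module isomorphism for the natural structures to exactly the same extent, and with exactly the same subtlety, as the isomorphism produced by the paper's Independence-of-Base argument.
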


\begin{proof} Since local cohomology is the same up to radical, we may replace $I$ with its radical in $R$ \cite[Remark 1.2.3, p.~5]{BrSh}. Since $R \subseteq S$ is integral, each radical ideal of $R$ is contracted from $S$, and hence we may assume that $I$ is an ideal of $R$ meeting $C$ that is contracted from an ideal of $S$.
Let $c \in I \cap C$, and let $R'$, $S'$ and $K'$ be the $(c)$-adic completions of $R$, $S$ and $K$, respectively. 
By Lemma~\ref{Groth}, $H_I^i(R) \cong H_{IR'}^i(R')$.  Also,
by (2.6), there is a ring isomorphism $f:R' \rightarrow S' \star K'$ such that since $I$ is contracted
 from $S$, we have by Lemma~\ref{product lemma},  $f(I)(S'\star K') = IS' \star K'$.
 Thus 
  $$H_{I}^i(R) \cong H_{(IS') \star K'}(S' \star K').$$ Again since  local cohomology is the same up to radical, then $$H_{I}^i(R) \cong H_{(IS') \star K'}(S' \star K') \cong H_{(IS') \star (IK')}(S'\star K').$$
 Now let $\iota:S' \rightarrow S' \star K'$ denote the  injection $s \mapsto (s,0).$  Then under the $S'$-module structure induced on $S' \star K'$ by $\iota$, we have $$IS'(S' \star K') = (IS' \star 0)(S'\star K') = IS' \star IK'.$$
 Therefore, 
  by Independence of Base \cite[4.2.1, p.~71]{BrSh}, $$H_{I}^i(R) \cong H_{(IS') \star (IK')}^i(S' \star K') \cong H_{IS'}^i(S' \oplus K').$$ Since local cohomology commutes with direct sums \cite[Proposition 7.3, p.~78]{Iye}, we conclude  $$H_I^i(R) \cong H_{IS'}^i(S' \oplus K') \cong H_{IS'}^i(S') \oplus H_{IS'}^i(K').$$ An application of Lemma~\ref{Groth} yields $H^i_{IS'}(S') \cong H^i_{IS}(S)$ and $H^i_{IS'}(K') \cong H^i_{IS}(K)$, and this proves the theorem.
 \end{proof}

If $I$ is an ideal of the  local Noetherian ring $A$ and  $L$ is an $A$-module, then $\depth_I(L)$ is the greatest integer $i$ such that for all $j < i$, $H^j_I(L)=0$.  When $L$ is a finitely generated $A$-module, then $\depth_I(L)$ is the length of a maximal regular sequence on $L$ \cite[Theorem~6.27]{BrSh}.  Regardless of whether $K$ is a finitely generated $S$-module, we have from the theorem: \index{depth} \index{$\depth_I(L)$}

\begin{cor} \label{depth} With the same assumptions as the theorem,  $$\depth_I(R) = \min\{\depth_{IS}(S),\depth_{IS}(K)\}. $$ 
\end{cor}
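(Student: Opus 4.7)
The plan is to read the formula off Theorem~\ref{local cohomology} using only the definition of depth via local cohomology. Recall that for an ideal $I$ of a Noetherian ring $A$ and an $A$-module $L$, $\depth_I(L)$ is by definition the largest integer $i\geq 0$ such that $H^j_I(L)=0$ for all $j<i$; equivalently, it is the smallest index $i$ with $H^i_I(L)\ne 0$ (with the convention $\infty$ if no such index exists). So the corollary reduces to tracking when $H^i_I(R)$ vanishes.

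First I would invoke Theorem~\ref{local cohomology} to obtain, for every $i\geq 0$, the $R$-module isomorphism $H^i_I(R)\cong H^i_{IS}(S)\oplus H^i_{IS}(K)$. Since a direct sum of modules vanishes if and only if each of its summands vanishes, it follows that $H^j_I(R)=0$ if and only if both $H^j_{IS}(S)=0$ and $H^j_{IS}(K)=0$. Taking the least $j$ for which this simultaneous vanishing fails then gives $\depth_I(R)=\min\{\depth_{IS}(S),\,\depth_{IS}(K)\}$.

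There is essentially no obstacle here beyond what has already been packaged into Theorem~\ref{local cohomology}. The one point that deserves emphasis is that the argument relies on the cohomological definition of depth rather than the regular-sequence characterization, and this is precisely the definition recalled just above the corollary statement. This distinction matters, because $K$ is not assumed to be a finitely generated $S$-module, so the regular-sequence interpretation is not available for the summand $H^i_{IS}(K)$; with the cohomological definition, no finite generation hypothesis on $K$ is needed and the result follows in one line from the theorem.
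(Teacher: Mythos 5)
Your proof is correct and follows exactly the route the paper intends: the corollary is stated as an immediate consequence of Theorem~\ref{local cohomology} together with the cohomological definition of depth recalled just before it, which is precisely your argument (including the observation that no finite generation of $K$ is needed).
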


\section{Subrings twisted by a finitely generated module}

In \cite[Theorem 6.1]{OlbCounter}, it is shown that when $S$ is a local Noetherian domain and $R$ is a subring of $S$ strongly twisted by a  finitely generated torsion-free $S$-module $K$, then $R$ is Cohen-Macaulay if and only if $S$ is Cohen-Macaulay and $K$ is a maximal Cohen-Macaulay module; $R$ is  Gorenstein  if and only  if  $S$ is  Cohen-Macaulay and admits a canonical module $\omega_{S}$ with $K \cong \omega_S$;  
  $R$ is a complete intersection if and only if $S$ is a complete intersection and $K \cong S$; and 
 $R$ is a hypersurface if and only if $S$ is a regular local ring and $K \cong S$.

Motivated by these descriptions, 
 we consider subrings of a local Noetherian domain twisted by a finitely generated module. 
Although the case of strongly twisted subrings motivates this section, we work under the following more general  assumption.

\begin{quote} {\it $S$ is a local Noetherian domain with maximal ideal
 $N$ and quotient field $F$;  $K$ is a nonzero {\bf finitely generated}
 torsion-free $S$-module; and $R$ is a Noetherian subring of $S$ twisted by $K$ along some multiplicatively closed subset $C$ of nonzerodivisors of $S$  that meets $N$.}
\end{quote}

Note that since $N$ meets $C$ and $R \subseteq S$ is integral, then $R$ is quasilocal with maximal ideal $M$ meeting $C$. 
We first calculate embedding dimension and multiplicity of $R$ in terms of $S$ and $K$.  
 Let $(A,{\ff m})$ be a local Noetherian ring, let $J$ be an ${\ff m}$-primary ideal of $A$, and let $L$ be a finitely generated $A$-module. 
Denote by  \index{Hilbert polynomial}  $P_{J,L}(X) \in {\mathbb{Q}}[X]$  the {Hilbert polynomial}  of $J$ on $L$; that is, $P_{J,L}(X)$ has the property that for $n \gg 0$, $P_{J,L}(n) = H_{J,L}(n)$ \cite[Proposition 12.2 and Exercise 12.6]{Eis}.  
We write $e(J,L)$ for the multiplicity of $J$ on $L$.
The \index{multiplicity of a local ring} {multiplicity of the local ring} $(A,{\ff m})$ is denoted $e(A)$, and is defined by $e(A)= e({\ff m},A)$.

\begin{thm} \label{fg invariants} Let $I$ and $J$ be ideals of $R$ contracted from $S$ such that $J$ is $M$-primary and $I$ meets $C$.  
\begin{itemize}

\item[{(1)}]   $\mu_R(I) = \mu_S(IS) + \mu_S(K)$.  

\item[{(2)}]   $\embdim R = \embdim S + \mu_S(K).$

\item[{(3)}]   $e(I,J) = e(IS,S) \cdot (1+{\rm rank}(K)).$

\item[{(4)}]    $e(R) = e(S) \cdot (1+{\rm rank}(K))$.

\end{itemize}
\end{thm}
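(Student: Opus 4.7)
\emph{The plan} is to derive all four statements from Theorem~\ref{pre-construction Hilbert}, together with standard facts about multiplicity and rank. For (1), I would apply Theorem~\ref{pre-construction Hilbert}(2) directly to obtain $\mu_R(I) = \mu_S(IS) + \dim_{S/N}(K/NK)$, and since $K$ is a finitely generated module over the local ring $S$, Nakayama's lemma gives $\mu_S(K) = \dim_{S/N}(K/NK)$. For (2), specialize (1) with $I = M$: this ideal meets $C$ (using (2.2) applied to any element of $C \cap N$) and is contracted from $S$ (it equals $N \cap R$ by (2.4)), so $\embdim R = \mu_R(M) = \mu_S(N) + \mu_S(K) = \embdim S + \mu_S(K)$.

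For (3), I would begin from the formula in Theorem~\ref{pre-construction Hilbert}(1): for all $n > 0$,
$$H_{J,I}(n) \; = \; H_{JS,IS}(n) + H_{JS,(J+I)K}(n-1).$$
Let $d = \dim R = \dim S$ (equal because $R \subseteq S$ is integral). Each Hilbert polynomial $P_{J,I}$, $P_{JS,IS}$, $P_{JS,(J+I)K}$ has degree $d-1$: this uses that $IS$ is a nonzero ideal of the domain $S$ and so has full support, and that $(J+I)K$ has the same rank as $K$ because $J+I$ contains a nonzerodivisor and $K$ is torsion-free. Comparing leading coefficients (and multiplying by $(d-1)!$) then yields
$$e(J,I) \; = \; e(JS,IS) + e(JS,(J+I)K).$$
Since $S$ is a domain of dimension $d$, the associativity formula for multiplicity reads $e(JS, L) = \rank_S(L) \cdot e(JS, S)$ for any finitely generated $S$-module $L$ of dimension $d$, whence $e(JS,IS) = e(JS,S)$ and $e(JS,(J+I)K) = \rank(K) \cdot e(JS,S)$. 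Combining these gives the claimed identity $e(J,I) = e(JS,S)(1 + \rank(K))$.

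For (4), I would apply (3) with $I = J = M$: this ideal is $M$-primary, meets $C$, is contracted, and satisfies $MS = N$, so $e(M,M) = e(N,S)(1 + \rank(K)) = e(S)(1 + \rank(K))$. Since $M$ is a nonzero ideal of the local domain $R$ with $R/M$ of finite length, standard multiplicity facts give $e(M,M) = e(M,R) = e(R)$, which completes the proof. \emph{The main obstacle} I anticipate is the bookkeeping in (3): verifying simultaneously that all three Hilbert polynomials have the common degree $d-1$ so that leading coefficients may be compared, and applying the rank/associativity formula cleanly in spite of the index shift $n \mapsto n-1$ appearing on the right-hand side of the Hilbert-function identity.
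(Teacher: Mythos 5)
Your proposal is correct and follows essentially the same route as the paper: parts (1)--(2) come from Theorem~\ref{pre-construction Hilbert}(2) together with $N=MS$, and part (3) is exactly the paper's comparison of leading coefficients of $P_{J,I}=P_{JS,IS}+P_{JS,(I+J)K}(X-1)$ followed by the rank (associativity) formula $e(JS,L)=\rank(L)\cdot e(JS,S)$. The only cosmetic difference is in (4), where you take $I=J=M$ and invoke additivity to get $e(M,M)=e(M,R)$, whereas the paper simply specializes using $N=MS$; both are fine.
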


\begin{proof} 
(1)  Since $K$ is finitely generated, $\mu_S(K) = \dim_{S/N}K/NK$, so (1) follows from Theorem~\ref{pre-construction Hilbert}(2). 

(2) By (2.4), $N = MS$, where $M$ is the maximal ideal of $R$, so (2) is clear.  

(3)  As discussed before the theorem, there exists a unique polynomial $P_{J,I}$ such that $P_{J,I}(i) = H_{J,I}(i)$ for all $i\gg 0$.  Similarly, since by assumption, $K$, and hence $(I+J)K$, are finitely generated, there exists a unique polynomial $P_{JS,(I+J)K}$ such that $$P_{JS,(I+J)K}(i) = H_{JS,(I+J)K}(i) \: {\mbox{ for all }} i\gg 0.$$  Define a polynomial $P^*_{JS,(I+J)K}(X) \in {\mathbb{Q}}[X]$ by $$P^*_{JS,(I+J)K}(X) = P_{JS,(I+J)K}(X-1).$$
 Then by Theorem~\ref{pre-construction Hilbert} we have that for sufficiently large $i$,
 \begin{eqnarray*}H_{J,I}(i)  & = & H_{JS,IS}(i) + H_{JS,(I+J)K}(i-1) \\
 \: & = & P_{JS,IS}(i) + P_{JS,(I+J)K}(i-1) \\
 \: & = & P_{JS,IS}(i) + P^*_{JS,(I+J)K}(i).
 \end{eqnarray*}
Thus $P_{JS,IS} + P^*_{JS,(I+J)K}$ is the Hilbert polynomial $P_{J,I}$ for $H_{J,I}$.    Let $d$ denote the Krull dimension of $S$.  Note that the degree and leading coefficient of $P^*_{JS,(I+J)K}$ are the same as that of $P_{JS,(I+J)K}$.
 Since $(I+J)K$ is a torsion-free $S$-module, $(I+J)K$ has dimension $d$ as an $S$-module, so that $P^*_{JS,(I+J)K}$  has degree $d-1$.  Moreover, the leading coefficient of $P^*_{JS,(I+J)K}$  is $e(JS,(I+J)K)/(d-1)!$.   Thus the leading coefficient of $P_{J,I}$ is $$\frac{e(JS,IS) + e(JS,(I+J)K)}{(d-1)!},$$ and hence we arrive at $$e(J,I) = e(JS,IS) + e(JS,(I+J)K).$$  But since $K$ is a finitely generated torsion-free module over the domain $S$, and $JS$ is an $N$-primary ideal of $S$, we can expand this further by using the fact that $e(JS,IS) = e(JS,S)\cdot \rank(IS) = e(JS,S)$ \cite[Theorem 14.8, p.~109]{Ma}.
Moreover, $$e(JS,(I+J)K) = e(JS,S) \cdot \rank((I+J)K) = e(JS,S) \cdot \rank(K),$$ where the last equality follows from the fact that since $K$ is torsion-free,  it has the same rank as $(I+J)K$.
 Therefore, \begin{eqnarray*}
 e(J,I) & = &  e(JS,IS) + e(JS,(I+J)K) \\
 \: & = & e(JS,S) + e(JS,S) \cdot \rank(K) \\
 \: & = & e(JS,S)\cdot (1+ \rank(K)),\end{eqnarray*} and this completes the proof of (3).
 
 (4) This  is clear in light of  (3) and the fact from (2.4) that $N = MS$, where $M$ is the maximal ideal of $R$. 
\end{proof}

As noted at the beginning of the section, when $S$ is Cohen-Macaulay and $R$ is strongly twisted by a maximal Cohen-Macaulay module $K$, then $R$ is Cohen-Macaulay. 
Even in this case, the obstacle to saying more  about the possibilities for the embedding dimension and multiplicity of $R$ is knowledge about the sizes of the maximal Cohen-Macaulay $S$-modules.    There are some limitations imposed on these modules.  If $(A,{\ff m})$ is a local Cohen-Macaulay domain and $L$ is a maximal Cohen-Macaulay $A$-module, then $\mu_A(L) \leq e({\ff m},L)$  \cite[Proposition 1.1]{BHU}. If this bound is attained, that is, if $\mu_A(L) = e({\ff m},L)$, then $L$ is said to a \index{maximal Cohen-Macaulay module!maximally generated} {\it maximally generated maximal Cohen-Macaulay module}.

For example, if $S$ is a regular local ring, then every maximal Cohen-Macaulay module is free and $e(S) = 1$, so every maximal Cohen-Macaulay module is necessarily maximally generated.  However, given a local Cohen-Macaulay ring, it is difficult in general to determine whether there exist maximally generated maximal Cohen-Macaulay modules; see \cite{BHU}.
  But granted their existence for our specific ring $S$, we can  say a little more about the embedding dimensions and multiplicities of $R$ and $S$:

\begin{cor} \label{MGMCM} If $S$ is a Cohen-Macaulay ring that admits a maximally generated maximal Cohen-Macaulay module, and $K$ is chosen to be this module, then  $R$ is a
  Cohen-Macaulay ring with $\embdim R - \embdim S = e(R) - e(S).$
\end{cor}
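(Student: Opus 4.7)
The plan is to extract both assertions directly from the arithmetic formulas in Theorem~\ref{fg invariants} combined with the depth formula in Corollary~\ref{depth}, once the hypothesis that $K$ is maximally generated has been translated into a numerical identity.

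First I would translate the hypothesis. By definition, $K$ being a maximally generated maximal Cohen-Macaulay $S$-module means $\mu_S(K) = e(N,K)$. Since $S$ is a Cohen-Macaulay domain and $K$ is a finitely generated torsion-free $S$-module, the standard multiplicity formula for torsion-free modules over a domain (the same fact \cite[Theorem 14.8]{Ma} invoked in the proof of Theorem~\ref{fg invariants}(3)) gives $e(N,K) = e(S)\cdot\rank(K)$. Hence
\[
\mu_S(K) \;=\; e(S)\cdot\rank(K).
\]

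Next I would plug this into Theorem~\ref{fg invariants}. Part~(2) gives
\[
\embdim R - \embdim S \;=\; \mu_S(K) \;=\; e(S)\cdot\rank(K),
\]
while part~(4) gives
\[
e(R) - e(S) \;=\; e(S)\cdot(1+\rank(K)) - e(S) \;=\; e(S)\cdot\rank(K).
\]
The two right-hand sides agree, which is the claimed equality.

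It remains to check that $R$ is Cohen-Macaulay. I would apply Corollary~\ref{depth} with $I = M$, the maximal ideal of $R$. The maximal ideal $M$ of $R$ meets $C$: take $c \in C \cap N$ (which exists by hypothesis) and any $c' \in C$ with $c'c \in R$ (which exists by (2.2)); then $c'c \in C \cap M$. So the hypotheses of Corollary~\ref{depth} apply with $I = M$, and since $MS = N$ by (2.4), we get
\[
\depth_M(R) \;=\; \min\{\depth_N(S),\,\depth_N(K)\}.
\]
By assumption $S$ is Cohen-Macaulay and $K$ is a maximal Cohen-Macaulay $S$-module, so both terms equal $\dim S$. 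Since $R \subseteq S$ is an integral extension of Noetherian local domains (sharing the same quotient field by (2.2)), $\dim R = \dim S$. Hence $\depth_M(R) = \dim R$, proving that the Noetherian local ring $R$ is Cohen-Macaulay.

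There is no real obstacle here beyond bookkeeping; the only step that needs a moment of care is verifying that $M$ meets $C$ so that Corollary~\ref{depth} is applicable, since that corollary is stated in the more general ``twisted along $C$'' setting rather than the strongly twisted setting.
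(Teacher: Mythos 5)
Your proof is correct. For the displayed identity it is essentially the paper's own argument: translate the hypothesis into $\mu_S(K)=e(N,K)=e(S)\cdot\rank(K)$ and feed this into parts (2) and (4) of Theorem~\ref{fg invariants}; the paper's proof is exactly this one-line chain of equalities. Where you differ is in the Cohen--Macaulay claim: the paper does not argue this in the proof at all, but relies on the remark at the start of the section quoting \cite[Theorem 6.1]{OlbCounter}, which characterizes when a \emph{strongly} twisted subring is Cohen--Macaulay, whereas the standing hypothesis of the section is only that $R$ is twisted along $C$. Your route instead derives $\depth_M(R)=\min\{\depth_N(S),\depth_N(K)\}=\dim S=\dim R$ from Corollary~\ref{depth} (after checking, as the paper also notes right after the standing assumption, that $M$ meets $C$, and using $MS=N$ from (2.4) together with the $C$-torsion-freeness of $K$). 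This is a self-contained argument staying inside the paper's own results, and it has the small advantage of covering the general twisted-along-$C$ setting rather than only the strongly twisted case; the paper's citation buys brevity and the stronger ``if and only if'' statement, but only under the strongly twisted hypothesis.
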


\begin{proof} By Theorem~\ref{fg invariants} and the fact that $\mu_S(K) = e(N,K)$, we have $$\embdim R - \embdim S = \mu_{S}(K) = e(N,K) = e(S) \cdot \rank(K) = e(R) - e(S).$$    
\end{proof}

Conversely, if $R$ is  Cohen-Macaulay and $\embdim R - \embdim S = e(R) - e(S)$, then from Theorem~\ref{fg invariants},
 we deduce that $\mu_S(K) = e(S) \cdot \rank(K) = e(N,K)$.
 %
%
%
%
  Thus in our special setting the existence of such maximally generated maximal Cohen-Macaulay modules is equivalent to the existence of a twisted Cohen-Macaulay subring of $S$ whose embedding dimension differs from that of $S$ by the same amount as its multiplicity differs from that of $S$.

If $A$ is a local Cohen-Macaulay ring of Krull dimension $d$, then an inequality due to Abhyankar \index{Abhyankar, S.} \cite{AbhLocal} places a lower bound on the multiplicity of $A$: $$e(A) \geq \embdim A - d+1.$$  When the lower bound is attained, that is, when equality holds, then $A$ has {\it minimal multiplicity}. 
 In our context where $R$ is a twisted subring of $S$ of Krull dimension $d$, we have from Theorem~\ref{fg invariants} that $$ \embdim R - d + 1  =  \embdim S + \mu_S(K) - d + 1.$$ Thus since by Theorem~\ref{fg invariants}, $e(R) = e(S) \cdot (1+\rank(K))$, we see that $R$ is a Cohen-Macaulay ring of minimal multiplicity if and only if $S$ is a Cohen-Macaulay ring and $K$ is a maximal Cohen-Macaulay module  with $$e(S) + e(S) \cdot \rank(K)\: = \: \embdim S + \mu_S(K) - d + 1,$$ or equivalently,
$$e(S) - (\embdim S - d +1) \: = \:   \mu_S(K) - e(S) \cdot \rank(K).$$  By Abhyankar's inequality, the left hand side is never less than $0$, and similarly, as discussed above, $\mu_S(K) \leq e(S) \cdot \rank(K)$, so that the right hand side is  never more than $0$.  Therefore, $R$ has minimal multiplicity if and only if $e(S) = \embdim S - d +1$ and $\mu_S(K) = e(S) \cdot \rank(K).$ 
 Putting all this together, we have:

\begin{cor} \label{min mult} $R$   is  Cohen-Macaulay ring of minimal multiplicity if and only if  $S$ is a Cohen-Macaulay ring of  minimal multiplicity and  $K$ is a maximally generated maximal Cohen-Macaulay module. \qed
\end{cor}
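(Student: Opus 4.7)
The plan is to reduce the corollary to a numerical identity between $\embdim R$, $e(R)$ and the corresponding invariants of $S$ and $K$, and then force both Abhyankar's inequality (for $S$) and the Brennan--Herzog--Ulrich inequality (for $K$) to be equalities simultaneously. Since by the result cited from \cite[Theorem 6.1]{OlbCounter} at the start of the section, $R$ is Cohen-Macaulay exactly when $S$ is Cohen-Macaulay and $K$ is a maximal Cohen-Macaulay $S$-module, we may assume throughout that both $R$ and $S$ are Cohen-Macaulay of the common Krull dimension $d$ (the equality of dimensions follows from the integrality and domain hypotheses in (2.3)--(2.4) together with standard dimension theory).

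First I would apply Theorem~\ref{fg invariants}(2) and (4) to rewrite the minimal-multiplicity condition $e(R) = \embdim R - d + 1$ as
\[
e(S)\bigl(1 + \rank(K)\bigr) \: = \: \embdim S + \mu_S(K) - d + 1,
\]
which rearranges to
\[
\bigl(e(S) - (\embdim S - d + 1)\bigr) \: = \: \bigl(\mu_S(K) - e(S)\cdot \rank(K)\bigr).
\]
By Abhyankar's inequality applied to the Cohen-Macaulay local ring $S$, the left-hand side is $\geq 0$, with equality precisely when $S$ has minimal multiplicity. By the Brennan--Herzog--Ulrich bound $\mu_S(K) \leq e(N,K)$ for a maximal Cohen-Macaulay module $K$, together with the fact that $e(N,K) = e(S)\cdot \rank(K)$ for a torsion-free $S$-module of constant rank, the right-hand side is $\leq 0$, with equality precisely when $K$ is maximally generated.

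Since a nonnegative quantity equals a nonpositive one, the single equation forces each side to vanish, and this yields both characterizations simultaneously. Conversely, if $S$ has minimal multiplicity and $K$ is a maximally generated maximal Cohen-Macaulay $S$-module, both sides are zero, so the displayed identity holds and $R$ has minimal multiplicity. I do not anticipate a serious obstacle here: the content of the corollary is essentially already assembled in the paragraph preceding its statement, and the proof is a clean bookkeeping argument combining Theorem~\ref{fg invariants} with the two extremal inequalities. The only point requiring a moment of care is to verify that $e(N,K) = e(S)\cdot \rank(K)$ for the finitely generated torsion-free $S$-module $K$, which follows from the associativity formula for multiplicities as used in the proof of Theorem~\ref{fg invariants}(3).
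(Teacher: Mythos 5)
Your proposal is correct and follows essentially the same route as the paper: the paper's own proof is exactly the computation in the paragraph preceding the corollary, combining Theorem~\ref{fg invariants}(2),(4) with the Cohen--Macaulay equivalence, Abhyankar's inequality, and the Brennan--Herzog--Ulrich bound $\mu_S(K) \leq e(N,K) = e(S)\cdot\rank(K)$ to force both sides of the rearranged identity to vanish. No substantive difference from the paper's argument.
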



When $A$ is a Cohen-Macaulay ring with infinite residue field, then $A$ has minimal multiplicity if and only if the maximal ideal of $A$ has reduction number $\leq 1$ \cite[Exercise 4.6.14, p.~192]{BH}.
Using this fact, 
 we  give the example promised after Corollary~\ref{minimal red} of a situation in which $r_R(I) = r_S(IS)$.  The idea is to choose $(S,N)$ to be a local ring with infinite residue field and  minimal multiplicity, but such  that $S$ is not a regular local ring.  Then as discussed above, the fact that $S$ has minimal multiplicity implies that $N$ has reduction number $\leq 1$. In fact, since $S$ is not a regular local ring  it must be that $r_S(N) = 1$.  If we choose $K$ to be a maximally generated maximal Cohen-Macaulay $S$-module, then by Corollary~\ref{min mult}, $R$ also has minimal multiplicity and is not regular, so that $r_R(M) = 1$.  Since by (2.4), $N = MS$, we have then that $r_R(M) = 1 = r_S(MS)$.

Thus to illustrate Corollary~\ref{minimal red}, all that remains to do is to show that $S$ can be chosen in conformance with (1.1) in such a way that $S$ has minimal multiplicity, $S$ has infinite residue field, $S$ is not regular, and $S$ possesses a maximally generated maximal Cohen-Macaulay module.

\begin{exmp} \label{second reduction example} \index{strongly twisted subring!existence of}
\index{strongly twisted subring!characteristic $p$}
\index{strongly twisted subring!of minimal multiplicity}
{\em Let $k$ be a field of characteristic $p \ne 0$ that is separably generated of infinite transcendence degree over a countable subfield, and let $T,X_1,\ldots,X_n$ be indeterminates for $k$.  Define $$S = k[T^2,T^3,X_1,\ldots,X_n]_{(T^2,T^3,X_1,\ldots,X_n)}.$$  Then $S$ has quotient field $k(T,X_1,\ldots,X_n)$, so that for any choice of a finitely generated torsion-free $S$-module $K$, there exists by (1.1) a subring of $S$ strongly twisted by $K$.  Since $S$ is the localization of a polynomial ring over the Cohen-Macaulay ring $k[T^2,T^3]$, $S$ is also a Cohen-Macaulay ring.  Thus since $S$ has infinite residue field, to show that $S$ has minimal multiplicity it suffices to show that the maximal ideal $N$ of $S$ has reduction number $1$.
  Since $(T^2,T^3)^2 = T^2(T^2,T^3)$ in $S$ (or even in $k[T^2,T^3])$, it follows that $N^2 = (T^2,X_1,\ldots,X_n)N$, and hence $N$ has reduction number at most $1$.  Therefore, $S$ is a Cohen-Macaulay ring of minimal multiplicity that is not a regular local ring (it is not even integrally closed).
All that remains now is to exhibit a maximally generated Cohen-Macaulay $S$-module $K$. Since $S$ has infinite residue field, this amounts by Lemma 1.3 in \cite{BHU} to finding  a finitely generated torsion-free $S$-module $K$ such that $IK = NK$ for some ideal $I$ of $S$ generated by a regular sequence.  Choosing $K = N$, we have then since $N$ has reduction number $1$ that $K$ is a  maximally generated maximal Cohen-Macaulay module, and hence $S$ satisfies all the requirements discussed before the example.  Therefore, $r_R(M) = r_S(MS)$, and this shows that the first case in Corollary~\ref{minimal red} occurs. \qed}

\end{exmp}



\section{Subrings twisted by a valuation ring}

\label{(V)} \label{V section}

In this section we no longer assume $K$ is a finitely generated
torsion-free module.  Instead, we work with an $S$-module $K$ that
although not finitely generated, has the property that $K/sK$ is
finitely generated for all $0 \ne s \in S$.  This then by
(2.9)  guarantees that when $S$
is a Noetherian ring, then a subring $R$ of $S$ strongly twisted
by $K$ is also Noetherian.  
Recall that  a {\it DVR}  is a rank one discrete valuation ring; \index{DVR}  equivalently, a DVR  is a local PID.   The DVR $V$ {\it birationally dominates} \index{birationally dominates} the local ring $S$ if $V$ is an overring of $S$ (and hence has the same quotient field as $S$) such that the maximal ideal of $V$ contracts to the maximal ideal of $S$; or, equivalently, $NV \ne V$, where $N$ is the maximal ideal of $S$.  Thus if $V$ birationally dominates $S$, we may consider the nonzero $S/N$-vector space $V/NV$.  We are interested in the case where  $V$ is {\it residually finite}; that is, when 
the vector space $V/NV$ has  finite dimension.  For then since $V$ is a DVR, it follows that for every $0 \ne s \in S$, $V/sV$ is a finitely generated $S$-module.    In fact,  when $K$ is a torsion-free finite rank $V$-module (with $V$ residually finite), then $K/sK$ is a finitely generated $S$-module for all $0 \ne s \in S$ \cite[Lemma 5.4]{OlbCounter}.

The existence of 
 such a DVR  depends on  the generic formal fiber of the local domain $S$.  Viewing the quotient field $F$ of $S$ as a subring of  the total quotient ring of $\widehat{S}$, the ring $\widehat{S}[F]$ is the {\it generic formal fiber} of  $S$. 
If $S$ has Krull dimension $d>0$, then the generic formal fiber has Krull dimension at most $d-1$.
Heinzer, Rotthaus and Sally  have shown in \cite[Corollary 2.4]{HRS} that    a birationaly dominating residually finite DVR exists if and only if the dimension of the generic formal fiber of $S$ is one less than the dimension of $S$.  Matsumura proved in \cite[Theorem 2]{Mat2} that the latter requirement is satisfied whenever $S$ is the localization at a maximal ideal of an affine $k$-domain,  where $k$ is a field and  the affine domain has Krull dimension $>1$.

In this section we assume the presence of such a residually finite DVR that birationally dominates $S$. Specifically, our assumptions for this section are: 
 
\begin{quote}{\it  $S$ is a local Noetherian domain with maximal ideal $N$ and quotient field $F$; $V$ is a DVR  such that
 $V$   birationally dominates $S$ and  $V/NV$ is a finite extension of $S/N$ of degree $m$; 
 and   $R$ is a subring of $S$ that is twisted by a nonzero torsion-free finite rank $V$-module $K$ along some multiplicatively closed subset $C$ of $S$ containing a nonunit of $S$.
  } \end{quote}

    These assumptions are the same as the last section with the  exception that rather than assume     $K$ is a finitely generated $S$-module, we assume  $K$ is a torsion-free
      finite rank $V$-module. We first prove a version of Theorem~\ref{fg invariants} for the present case. 
          Let ${\ff M}_V$ denote the maximal ideal of $V$, and let  $$r_K = \dim_{V/{\ff M}_V} K/{\ff M}_VK.$$ 
          By \cite[Lemma 5.4]{OlbCounter}, 
     for each $0 \ne x \in {\ff M}_V$, $K/xK$ is a free $V/xV$-module of rank $r_K$.  We use this invariant of $K$ in calculating the embedding dimension and multiplicity of $R$.

\begin{thm} \label{V invariants}
Let  $I$ and $J$ be  ideals of $R$ contracted from $S$ such that  $J$ is $M$-primary and $I$ meets $C$. 

\begin{itemize}

\item[{(1)}]   $\mu_R(I) = \mu_S(IS) + m \cdot r_K.$    

\item[{(2)}]  $\embdim R = \embdim S + m \cdot r_K.$

\item[{(3)}]  The multiplicity of $J$ on $I$ is  given by$$
e(J,I) = e(JS,S) +  \left\{ \begin{array}{ll}
r_K \cdot \length V/JV \; &  \mathrm{if}\, \dim(S) =1  \\
 0\; & \mathrm{if}\, \dim(S) >1.
\end{array}\right.
$$

\item[{(4)}]  The multiplicity of the local ring $R$ is   $$
e(R) = e(S) +  \left\{ \begin{array}{ll}
m\cdot r_K \; &  \mathrm{if}\, \dim(S)=1  \\
 0\; & \mathrm{if}\, \dim(S)>1. 
\end{array}\right.
$$

\end{itemize}
\end{thm}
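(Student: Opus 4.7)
My plan is to reduce all four statements to Theorem~\ref{pre-construction Hilbert} and then use the $V$-module structure on $K$ to evaluate the resulting Hilbert functions. The main technical observation I will use throughout is the following: for any nonzero $S$-ideal $\mathfrak{a}$ meeting $C$, the extension $\mathfrak{a}V$ is a nonzero ideal of the DVR $V$, hence of the form $\pi^{e}V$ where $\pi$ is a uniformizer of $V$. Since $K$ is a $V$-module, this gives $\mathfrak{a}K = (\mathfrak{a}V)K = \pi^{e}K$, and combined with \cite[Lemma~5.4]{OlbCounter}, which says $K/\pi^{e}K$ is $V/\pi^{e}V$-free of rank $r_K$, one obtains
$$\length(K/\mathfrak{a}K) = r_K \cdot \length(V/\mathfrak{a}V),$$
with lengths computed over $S$ (equivalently, over $V$).

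For part (1), I apply Theorem~\ref{pre-construction Hilbert}(2), reducing the problem to showing $\dim_{S/N}(K/NK) = m \cdot r_K$. The identity above with $\mathfrak{a} = N$ gives $\length(K/NK) = r_K \cdot \length(V/NV) = r_K \cdot m$, using the hypothesis that $V/NV$ has $S/N$-dimension $m$. Part (2) then follows by specializing to $I = M$ together with $N = MS$ from (2.4).

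The heart of the argument is part (3). Theorem~\ref{pre-construction Hilbert}(1) reduces the task to computing $H_{JS,(J+I)K}$. Writing $JV = \pi^{f}V$ and $(J+I)V = \pi^{h}V$, the observation above yields $(JS)^{n}(J+I)K = J^{n}(J+I)K = \pi^{fn+h}K$ for every $n\geq 0$. Since $K$ is $V$-torsion-free, multiplication by $\pi^{fn+h}$ is a $V$-module isomorphism from $K$ onto $\pi^{fn+h}K$, and therefore
$$H_{JS,(J+I)K}(n) = \length\bigl(\pi^{fn+h}K/\pi^{f(n+1)+h}K\bigr) = \length(K/\pi^{f}K) = r_K\cdot \length(V/JV),$$
a constant function of $n$. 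The desired multiplicity formula is then extracted by reading off leading coefficients: when $\dim S = 1$, both $H_{J,I}$ and $H_{JS,IS}$ are eventually constant polynomials, and the constant $r_K \cdot \length(V/JV)$ adds directly to $e(JS,IS) = e(JS,S)$; when $\dim S > 1$, the polynomial $P_{JS,IS}$ has degree $\dim S - 1 \geq 1$ with leading coefficient $e(JS,S)/(\dim S - 1)!$ (since $IS$ is a rank-one torsion-free $S$-module, as in the proof of Theorem~\ref{fg invariants}), so adding a constant leaves the leading coefficient unchanged and yields $e(J,I) = e(JS,S)$.

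Part (4) will be immediate from part (3) applied with $I = J = M$, using that $e(R) = e(M,R) = e(M,M)$ (a consequence of the identity $H_{M,M}(n) = H_{M,R}(n+1)$) together with $MV = NV$ and $\length(V/MV) = m$. The main obstacle I anticipate is the careful degree bookkeeping in part (3): verifying that the product $(JS)^{n}(J+I)K$ really does collapse to a single power of $\pi$ applied to $K$, and then distinguishing the cases $\dim S = 1$ and $\dim S > 1$ so that the additive constant either contributes fully to the multiplicity or is absorbed into the leading term of a higher-degree polynomial.
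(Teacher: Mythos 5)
Your proposal is correct and essentially reproduces the paper's proof: both arguments reduce everything to Theorem~\ref{pre-construction Hilbert}, use that $K/JK$ is a free $V/JV$-module of rank $r_K$ (so that $H_{JS,(I+J)K}$ is constant), and then read off leading coefficients according to whether $\dim S=1$ or $\dim S>1$, invoking $e(JS,IS)=e(JS,S)$ as in Theorem~\ref{fg invariants}. The only deviations are cosmetic --- you derive (4) from (3) with $I=J=M$ via a shift of Hilbert functions where the paper simply cites $N=MS$, and the lengths appearing in the formulas should be read as $S$-lengths (they need not coincide with $V$-lengths when $V/NV$ properly extends $S/N$), which is how you in fact use them.
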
  

\begin{proof} (1) 
  By
Theorem~\ref{pre-construction Hilbert},  the minimal number of generators of $I$ is 
\begin{eqnarray*} 
\mu_R(I) & = & \mu_S(IS) + \dim_{S/N} K/NK  \\
\: &  = &  \mu_S(IS) + \dim_{S/N}V/NV \cdot \dim_{V/NV}K/NK \\
\: & = & \mu_S(IS) + m \cdot r_k.
\end{eqnarray*}

(2) Since by (2.4), $N =MS$, statement (2) follows from (1).

(3)  By Theorem~\ref{pre-construction Hilbert}, we have for each $n>0$,
$$H_{J,I}(n) = H_{JS,IS}(n) + H_{JS,(I+J)K}(n-1).$$
As discussed before Theorem~\ref{pre-construction Hilbert}, there exists a   unique polynomial $P_{JS,IS}$ with rational coefficients such that $P_{JS,IS}(i) = H_{JS,IS}(i)$ for all $i\gg 0$.  Moreover, $P_{JS,IS}$ has degree $d-1$, where $d$ is the Krull dimension of $S$, and  the leading coefficient of $P_{JS,IS}$ is $e(JS,IS)/(d-1)!$.  Thus for  sufficiently large $i$, we have
\begin{eqnarray*}
H_{J,I}(i)  & = &   P_{JS,IS}(i) + H_{JS,(I+J)K}(i-1) \\
\: & = &  \frac{e(JS,IS)}{(d-1)!} i^{d-1} + {\mbox{ terms in }}P_{JS,IS}{\mbox{ of lower degree }} \\ \: & \: & \:\:\:\:\:\:\:\:\:\:\:\:\:\:\:\:\:\:\:\:\:\:\:\:\:\:\:\:\: + \: \: H_{JS,(I+J)K}(i-1).
\end{eqnarray*}
It remains to examine the Hilbert function: $$H_{JS,(I+J)K}(i) = \length  J^{i}(I+J)K/ J^{i+1}(I+J)K, {\mbox{ where }} i>0.$$   Since, as discussed before the theorem, $K/JK$ is a free $V/JV$-module of rank $r_K$,  and every ideal of $V$ is a principal ideal, we have:
 \begin{eqnarray*}
 H_{JS,(I+J)K}(i) & = & \length  J^{i}(I+J)K/ J^{i+1}(I+J)K \\
 \:& = & \length K/JK \\
 \: & = & \length \bigoplus_{j=1}^{r_K} V/JV \\
 \: & = & r_K \cdot \length V/JV.
 \end{eqnarray*}
In particular, $H_{JS,(I+J)K}(i)$ is constant for all $i>0$.
  This means that for sufficiently large $i$, we have:  \begin{eqnarray*}
  P_{J,I}(i) & = &
  H_{J,I}(i) \\
  & = &  \frac{e(JS,IS)}{(d-1)!} i^{d-1}+ {\mbox{ terms in }} P_{JS,IS} {\mbox{ of lower degree }}\\ \: & \: & \:\:\:\:\:\:\:\:\:\:\:\:\:\:\:\:\:\:\:\:\:\:\:\:\:\:\:\:\: + \: \:  r_K \cdot \length V/JV.
  \end{eqnarray*}
Therefore,
if the Krull dimension $d$ of $S$ is $1$ so that $d-1 = 0$, we have that the leading coefficient (and constant term) of $P_{J,I}$ must be given by
$$e(J,I) = e(JS,IS) + r_K \cdot \length V/JV.$$
On the other hand, if $d>1$, then  the constant term $r_K\cdot \length V/JV$ contributes nothing to the leading coefficient of $P_{J,I}$, so we have
$e(J,I) = e(JS,IS).$ As discussed in the proof of Theorem~\ref{pre-construction Hilbert}(3), $E(JS,IS) = E(JS,S)$, so  we have justified  the stated multiplicity formula.

(4) This is clear from (3) and the fact that $N = MS$.
\end{proof}

Under the additional assumption that $V = S + NV$ (so that $m = 1$), then every $S$-module between $K$ and $FK$ is also a $V$-module (see  the proof of \cite[Proposition 5.6]{OlbCounter}), and hence 
 every ring between $R$ and $S$ is a local Noetherian ring twisted along $C$ by a $V$-module  between $K$ and $K_C$ \cite[Proposition 5.6]{OlbCounter}. It follows from 
 Theorem~\ref{V invariants} that the  multiplicity of the rings between $R$ and $S$ decreases from $e(S)+r_k$ to  $e(S)$, where the last multiplicity is obtained only for $S$.

The special case $K = V$ is also of interest, especially when  $V= S + NV$. In this case, as above, all the $S$-modules properly between $K= V$ and $FK = F$ are of the form $v^{-1}V$ for some $0 \ne v \in V$.  In particular, these modules form a well-ordered chain. 
 It follows that the subrings $T$ of $S$ twisted by these modules also lie in a corresponding well-ordered chain.  This, along with the   facts that every $R$-submodule of $S$ containing $R$ is a ring (2.3) and every ring between $R$ and $S$ is twisted by a module between $K$ and $F$ \cite[Theorem 4.3]{OlbCounter}, imply
  that $T$ can be written as $T= R + tR$ for some $t \in T$.

 In summary:
 Suppose that $K = V$, $V = S + NV$ and $S$ has Krull dimension $>1$.  Then the rings $T$ between $R$ and $S$ are totally ordered by inclusion, and can be written $$R = T_0 \subsetneq T_1 \subsetneq T_2 \subsetneq \cdots \subsetneq S,$$ where for each $i$, $T_i$ is a local Noetherian subring of $S$ twisted along $C$.   Moreover, for each $i$, there exists $t_i \in T_i$ such that $T_i= R + t_iR$, and   $$e(T_i) = e(S) \: {\mbox{ and }} \: \embdim(T_i) = \embdim(S) + 1.$$  
We could in fact descend from $R$ also using the ideals $vV$, $0 \ne v \in V$.

Returning to the general case of the standing assumption for this section, the fact that $K$ is a module over a DVR makes it easy to calculate the local cohomology of $R$ in terms of that of $S$:

\begin{thm} \label{V cohomology}  The local cohomology modules for an ideal $I$ of $R$ meeting $C$ are given by  $$
H_I^i(R) \cong  \left\{ \begin{array}{ll}
0 \; &  \mathrm{if}\, i=0 \\
 H^1_{IS}(S) \oplus FK/K \; & \mathrm{if}\, i=1 \\
 H^i_{IS}(S) \; & \mathrm{if}\, i>1
\end{array}\right.
$$  
\end{thm}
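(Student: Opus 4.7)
The plan is to apply Theorem~\ref{local cohomology} and then exploit the $V$-module structure on $K$ to compute $H^i_{IS}(K)$ directly. Since $K$ is a $V$-torsion-free (in particular $C$-torsion-free) module twisting $R$, Theorem~\ref{local cohomology} gives
$$H^i_I(R) \cong H^i_{IS}(S) \oplus H^i_{IS}(K)$$
for every $i\geq 0$, so the task reduces to identifying the summand $H^i_{IS}(K)$.

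Next I would transfer the computation from $S$ to $V$. Since $K$ is a $V$-module, Independence of Base \cite[4.2.1]{BrSh} gives $H^i_{IS}(K)\cong H^i_{ISV}(K)$. Because $I$ meets $C$ and the elements of $C$ are nonzerodivisors of $S$, the ideal $ISV$ is a nonzero ideal of the DVR $V$, hence its radical is the maximal ideal ${\ff M}_V$. Since local cohomology depends only on the radical \cite[Remark 1.2.3]{BrSh},
$$H^i_{IS}(K)\cong H^i_{{\ff M}_V}(K).$$

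The DVR makes this last group transparent. Let $\pi$ be a uniformizer for $V$, so that ${\ff M}_V=(\pi)$. The \v{C}ech complex $0\to K\to K_\pi\to 0$ computes $H^\bullet_{{\ff M}_V}(K)$. Since $K$ is $V$-torsion-free the map $K\to K_\pi$ is injective, so $H^0_{{\ff M}_V}(K)=0$; its cokernel is $K_\pi/K=(F\otimes_V K)/K=FK/K$, giving $H^1_{{\ff M}_V}(K)\cong FK/K$; and $H^i_{{\ff M}_V}(K)=0$ for $i\geq 2$ because the complex has length one.

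Assembling these pieces case by case yields the theorem: for $i=0$ both summands vanish (the $S$-summand because $S$ is a domain and $IS\neq 0$, the $K$-summand by the above); for $i=1$ we obtain $H^1_{IS}(S)\oplus FK/K$; and for $i>1$ only $H^i_{IS}(S)$ survives. The only real subtlety is justifying the change-of-rings step $H^i_{IS}(K)\cong H^i_{{\ff M}_V}(K)$, but this is immediate from Independence of Base together with the radical-invariance of local cohomology once one notes that $K$ inherits a $V$-module structure compatible with its $S$-module structure.
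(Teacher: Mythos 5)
Your proposal is correct and follows essentially the same route as the paper: reduce via Theorem~\ref{local cohomology} to computing $H^i_{IS}(K)$, pass to $V$ by Independence of Base together with radical-invariance (so that only $H^i_{{\ff M}_V}(K)$ matters), and then use that $V$ is a one-dimensional DVR and $K$ is torsion-free. The only cosmetic difference is at the last step, where you read off $H^0_{{\ff M}_V}(K)=0$, $H^1_{{\ff M}_V}(K)\cong FK/K$ and the vanishing in degrees $\geq 2$ from the \v{C}ech complex $0\to K\to K_\pi\to 0$, while the paper gets the vanishing for $i>1$ from Grothendieck's Vanishing Theorem and computes $H^1$ as $\lim_{\rightarrow}\Ext^1_V(V/x^kV,K)\cong\lim_{\rightarrow}x^{-k}K/K\cong FK/K$ — an equivalent calculation.
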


\begin{proof}  By Theorem~\ref{local cohomology}, all that needs to be shown is that $H_{IS}^0(S) = H_{IS}^0(K)  = 0$, $H^1_{IS}(K) = FK/K$ and $H^i_{IS}(K)=0$ for all $i>1$. The first case, where $i=0$, is clear, since $S$ and $K$ are torsion-free $S$-modules.   To deal with the other cases, we can  by Independence of Base (see the proof of Theorem~\ref{local cohomology})  pass to the ring $V$ via an isomorphism: $H^i_N(K) \cong H^i_{NV}(K)$.  Since $V$ has Krull dimension $1$, $H^i_{NV}(K) =0$ for all $i>0$ (Grothendieck's Vanishing Theorem \cite[Theorem 6.1.2, p.~103]{BrSh}).
Thus since  $$H^1_{NV}(K)\cong \lim_{\rightarrow}\Ext_V^1(V/N^kV,K),$$
it is enough to show that $$\lim_{\rightarrow} \Ext_V^1(V/N^kV,K) \cong FK/K.$$  Since $V$ is a DVR, there exists $x \in N$ such that $NV=xV$.
 Consideration of the exact sequence, $$0 \rightarrow \Hom_V(V,K) \rightarrow \Hom_V(x^kV,K) \rightarrow \Ext_V^1(V/x^kV,K) \rightarrow 0,$$  yields a natural isomorphism $x^{-k}K/K \cong  \Ext_V^1(V/x^kV,K)$.  Thus since $F = V[x^{-1}]$, we obtain $$H^1_{NV}(K)\cong \lim_{\rightarrow}x^{-k}K/K \cong FK/K,$$ which verifies the theorem.
 \end{proof}

The calculation in Theorem~\ref{V cohomology} also illustrates a striking way in which $R$ fails to be Cohen-Macaulay whenever $R$ has Krull dimension $>1$:

\begin{cor}   With $M$ the maximal ideal of $R$, if $R \subsetneq S$, then  $\depth_M(R) = 1$.
\end{cor}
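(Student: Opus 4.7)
The plan is to read the depth directly off the local cohomology computation of Theorem~\ref{V cohomology}. Recall that $\depth_M(R)$ is the largest $i$ such that $H^j_M(R)=0$ for all $j<i$. First I will verify that $M$ meets $C$ so that Theorem~\ref{V cohomology} can actually be invoked for the ideal $I=M$: the standing hypothesis is that $C$ contains a nonunit of $S$, which lies in $N$, and by the bijection in (2.4) between primes of $R$ meeting $C$ and primes of $S$ meeting $C$, $M$ corresponds to $N$, so $M \cap C \ne \emptyset$.

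With $I=M$ in Theorem~\ref{V cohomology}, we have $H^0_M(R)=0$ and
\[ H^1_M(R) \;\cong\; H^1_N(S)\oplus FK/K. \]
The vanishing $H^0_M(R)=0$ (which also follows simply from the fact that $R$ is a subring of a domain) gives $\depth_M(R)\ge 1$. The whole content of the corollary is therefore the inequality $\depth_M(R)\le 1$, which amounts to showing $H^1_M(R)\ne 0$. Since the right-hand side is a direct sum, it suffices to show $FK/K\ne 0$, and this is the one place where the hypothesis $R\subsetneq S$ enters.

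To prove $FK\ne K$, I will argue by contradiction: assume $FK=K$. Then $K$ is an $F$-vector space, hence in particular an $S_C$-module (because $S_C\subseteq F$). The natural localization map $K\to K_C$ is then an isomorphism, so $K_C=K$. But then the derivation $D\colon S_C\to K_C$ satisfies $D^{-1}(K)=D^{-1}(K_C)=S_C$, whence
\[ R \;=\; S\cap D^{-1}(K) \;=\; S\cap S_C \;=\; S, \]
contradicting $R\subsetneq S$. Therefore $FK/K\ne 0$, which gives $H^1_M(R)\ne 0$ and completes the argument.

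The only nontrivial step is the small observation that divisibility of $K$ as a $V$-module forces $R=S$; everything else is bookkeeping with the local cohomology formula already proved in Theorem~\ref{V cohomology}. I do not expect any computational obstacle, so the proof should be essentially one short paragraph.
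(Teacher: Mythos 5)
Your proof is correct and follows essentially the same route as the paper: both read $\depth_M(R)=1$ off the local cohomology computation (Theorem~\ref{V cohomology}, equivalently Corollary~\ref{depth}), the key point being $H^1_M(R)\ne 0$ because $FK/K\ne 0$ when $R\subsetneq S$. Your contradiction argument that $K=FK$ would force $K=K_C$, hence $D^{-1}(K)=S_C$ and $R=S$, simply spells out the implication the paper treats as clear.
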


\begin{proof}  This follows from the depth calculation
in Corollary~\ref{depth}: $$\depth_M(R) = \min\{\depth_N(S),\depth_N(K)\}.$$ Indeed,
based on this expression of $\depth_M(R)$, it is enough to show that
$H^1_M(R) \ne 0$, and this is clear from the theorem, since we have
assumed $R \subsetneq S$ and hence that $K \subsetneq FK$.
\end{proof}

We illustrate some of these ideas with the following example, which is based on an existence result for Noetherian twisted subrings from \cite{OlbCounter}, that in turn is based on an argument of Ferrand and Raynaud \cite{FR}. It differs from most of our other examples in that in order for $R$ to be Noetherian, we do not require  $R$ to be a strongly twisted subring of $S$, only that $R$ is twisted along a ``small'' multiplicatively closed set. The idea is as follows. 
 Suppose $(S,N)$ is a two-dimensional local Noetherian UFD  that  is birationally   dominated by a DVR $V$ having the same residue field as $S$, and such that there is $t \in N$ with  $tV$ the maximal ideal of $V$.  If  $R$ is a subring of $S$ that is twisted along $C =\{t^i:i>0\}$ by    an $S$-submodule $K$ of a  finitely generated free $\widehat{V}$-module $K'$ with $K'/K$ a $C$-torsion-free $S$-module, then $R$ is a two-dimensional local Noetherian ring such that for every height $1$ prime ideal $P$ of $R$, $R_P = S_Q$ for some height $1$ prime ideal $Q$ of $S$ \cite[Theorem 5.7]{OlbCounter}.

\begin{exmp} \label{UFD example}  {\em \index{twisted subring!of a regular local ring} \index{twisted subring!of characteristic $0$}
Let $k$ be a field of characteristic $0$ having infinite transcendence degree over its prime subfield, and let $S = k[X,Y]_{(X,Y)}$. There exists a DVR overring $V$ of $S$ with residue field  $k$ and whose maximal ideal is generated by the maximal ideal $N = (X,Y)S$ of $S$; see for example \cite[p.~102]{ZS}.   Without loss of generality, assume $NV = XV$.  As discussed before the example, for every finite rank free $V$-module $K$, there is a local Noetherian \index{twisted subring!Noetherian} subring $R$ of $S$ that is twisted by $K$ along $C = \{X^i:i\geq 0\}$. \index{twisted subring!embedding dimension of} Thus using the results of this section we have for each $n>0$ there is a local Noetherian subring $R$ of $S$ such that \index{twisted subring!and multiplicity}
  $\embdim R = 2+n$ and $e(R) = 1$.   Moreover,  $R \subseteq S$ is a quadratic extension; $R$ has quotient field $F$ and  $\widehat{R}$ has nilpotents  and an embedded prime.
 Finally,   every ring between $R$ and $S$ is Noetherian. \qed
 }
\end{exmp}

In the example, $(S,N)$ could \index{twisted subring!of a UFD} be replaced by any UFD of characteristic $0$ and Krull dimension $2$ whose quotient field has infinite transcendence degree over its prime subfield, as long as there exists a DVR $V$ that birationally dominates $S$ and has $V= S + NV$.  If instead of this last condition we assume that $[V/NV:S/N] = m < \infty$, then the properties of $R$ in the example all remain true, with the exception that the embedding dimension would now be $2 + mn$, and our previous argument for $V = S +NV$ no longer  guarantees that the rings between $R$ and $S$ are Noetherian.
With different assumptions on the field  $k$, we can say more:

\begin{exmp} \label{UFD example 2}\index{separably generated extension} {\em Let $k$ be a field of characteristic $p$, and suppose  $k$ is  separably generated \index{twisted subring!of characteristic $p$} and of infinite transcendence degree over a countable subfield. \index{strongly twisted subring!of a regular local ring}  Let $S = k[X_1,\ldots,X_d]_{(X_1,\ldots,X_d)}$, where $d>1$.  Then there exists a DVR $V$  \index{DVR!birationally dominating} as in Example~\ref{UFD example}, and for each torsion-free $V$-module $K$ of finite rank $n>0$, \index{strongly twisted subring!Noetherian}
there exists by (1.1) and (2.9) a  local Noetherian subring $R$ of $S$ strongly twisted by $K$ and satisyfing all \index{strongly twisted subring!prime ideal of}
  the assertions in Example~\ref{UFD example}, \index{strongly twisted subring!embedding dimension of} with the exception \index{strongly twisted subring!integrally closed ideal of} that $\embdim(R) = d + n$. \index{strongly twisted subring!and regularity condition $R_i$} Also, $R \subseteq S$ is subintegral and as discussed above every ring between $R$ and $S$ is a local Noetherian ring strongly twisted by an $S$-module between $K$ and $FK$.
 \qed
   }
\end{exmp}





\medskip

{\it Acknowledgment.} I thank the referee for helpful comments that improved the presentation of the paper. 

\medskip


\begin{thebibliography}{FHP34}




\bibitem{AbhLocal} S.~Abhyankar,
Local rings of high embedding dimension.
Amer. J. Math. 89 (1967), 1073--1077.









\bibitem{BHU}  J.~Brennan, J.~Herzog and B.~Ulrich,
Maximally generated Cohen-Macaulay modules.
Math. Scand. 61 (1987), no. 2, 181--203.


\bibitem{BrSh} M.~P.~Brodmann and R.~Y.~Sharp,  Local cohomology: an algebraic introduction with geometric applications. Cambridge Studies in Advanced Mathematics, 60. Cambridge University Press, Cambridge, 1998.

\bibitem{BH} W.~Bruns and J.~Herzog, {\it Cohen-Macaulay rings}.
Cambridge Studies in Advanced Mathematics, 39. Cambridge University Press, Cambridge, 1993.



\bibitem{CE} H.~Cartan and S.~Eilenberg, {\it Homological algebra,} Princeton University Press, 1956.





\bibitem{Eis} D.~Eisenbud, {\it Commutative algebra. With a view toward algebraic
geometry.} Graduate Texts in Mathematics, 150. Springer-Verlag, New
York, 1995.





\bibitem{FR} D.~Ferrand and M.~Raynaud, Fibres formelles d'un anneau
local noeth\'erien.  Ann. Sci. \'Ecole Norm. Sup. (4) 3 1970 295--311.












\bibitem{Hart} R.~Hartshorne,
{\it Local cohomology.
A seminar given by A. Grothendieck, Harvard University, Fall, 1961.} Lecture Notes in Mathematics, No. 41 Springer-Verlag, Berlin-New York 1967.


\bibitem{HRS} W.~Heinzer, C.~Rotthaus and J.~Sally,
 Formal fibers
and birational extensions.  Nagoya Math. J.  131  (1993), 1--38.




 \bibitem{Iye} S.~Iyengar, G.~Leuschke, A.~Leykin, C.~Miller, Claudia, E.~Miller, A.~Singh, and U.~Walther,
{\it Twenty-four hours of local cohomology.}
Graduate Studies in Mathematics, 87. American Mathematical Society, Providence, RI, 2007.


























\bibitem{Ma}  H.~Matsumura, {\it Commutative ring theory}, Cambridge
University Press, 1986.

\bibitem{Mat2} H.~Matsumura, On the dimension of formal fibres of a
local ring, in: {\it Algebraic geometry and commutative algebra in
honor of Masayoshi Nagata},  Vol. I,  261--266, Kinokuniya, Tokyo,
1988.

\bibitem{Na}  M.~Nagata, {\it Local rings}.
Interscience Tracts in Pure and Applied Mathematics, No. 13, John
Wiley \& Sons, New York-London, 1962







\bibitem{OlbCounter} B.~Olberding, A counterpart to Nagata idealization. J.~Algebra 365 (2012), 199--221. 

\bibitem{OSurvey} B.~Olberding, Noetherian rings without finite normalization.  Progress in commutative algebra 2, 171--203, Walter de Gruyter, Berlin, 2012. 


\bibitem{OlbAR} B.~Olberding, One-dimensional bad Noetherian domains, Trans.~Amer.~Math. Soc., to appear.  

\bibitem{OlbNag} B.~Olberding, Generic formal fibers and analytically ramified stable rings, Nagoya Math.~J., to appear. 














\bibitem{Swan}  R.~G.~Swan, On seminormality. J. Algebra 67 (1980), pp. 210--229.


\bibitem{HS}  I.~Swanson and C.~Huneke, {\it Integral closure of ideals,
rings, and modules.} London Mathematical Society Lecture Note
Series, 336. Cambridge University Press, Cambridge, 2006.



\bibitem{Val} E.~Valtonen,  Some homological properties of commutative semitrivial ring extensions.  Manuscripta Math.  63  (1989),  no. 1, 45--68.



\bibitem{Wei}  C.~Weibel, $K$-theory and analytic isomorphisms.
 Invent. Math.  61  (1980), no. 2, 177--197.


\bibitem{ZS} O.~Zariski and P.~Samuel, {\it Commutative algebra vol. 2},
Van Nostrand, Princeton, 1958.









\end{thebibliography}
\end{document}